\def\C{\mbb{C}}
\newcommand{\be}{\begin{equation}}
\newcommand{\ee}{\end{equation}}
\newcommand{\mbb}[1]{\mathbb{#1}}
\numberwithin{equation}{section}
\newtheorem{thm}{Theorem}[section]
\newtheorem{defi}[thm]{Definition}
\newtheorem{prop}[thm]{Propostion}
\newtheorem{rmk}[thm]{Remark}
\newtheorem{lem}[thm]{Lemma}
\newcommand{\abs}[1]{\left\lvert#1\right\rvert}
\newcommand{\norm}[1]{\lVert#1\rVert}
\newcommand{\inpro}[1]{\langle#1\rangle}
\newcommand{\inppro}[1]{\left(#1\right)}
\newcommand{\sfl}[1]{\mathrm{sf}\{#1\}}
\newcommand{\tr}[1]{\mathrm{tr}\left[#1\right]}
\def\D{\mathscr{D}}
\def\d{\mathrm{d}}
\def\ch{\mathrm{ch}}
\def\End{\mathrm{End}}
\def\Sp{\mathbb{S}}
\def\D2{(\widehat{D}^E_{s,\varepsilon})^2}
\def\de2{(D^{E}_s)^2}
\begin{document}
\title{Spectral Flow, Llarull's Rigidity Theorem in Odd Dimensions and its Generalization}
\date{}
\author{Yihan Li\footnote{
	Chern Institute of Mathematics \& LPMC, 
	Nankai University,
	Tianjin 300071,
	P.R.China.  yhli@nankai.edu.cn} , Guangxiang Su \footnote{
	Chern Institute of Mathematics \& LPMC, 
	Nankai University,
	Tianjin 300071,
	P.R.China.   guangxiangsu@nankai.edu.cn}and Xiangsheng Wang\footnote{
	School of Mathematical Sciences, Shandong University, Jinan, Shandong 250100, P.R.China.  xiangsheng@sdu.edu.cn}}
\maketitle

\begin{abstract}
For a compact spin Riemannian manifold $(M,g^{TM})$ of dimension $n$ such that the associated scalar curvature $k^{TM}$ verifies that $k^{TM}\geqslant n(n-1)$, Llarull's rigidity theorem says that any area-decreasing smooth map $f$ from $M$ to the unit sphere $\Sp^{n}$ of nonzero degree is an isometry. We present in this paper a new proof for Llarull's rigidity theorem in odd dimensions via a spectral flow argument. This approach also works for a generalization of Llarrull's theorem when the sphere $\Sp^{n}$ is replaced by an arbitrary smooth strictly convex closed hypersurface in $\mathbb{R}^{n+1}$. The results answer two questions by Gromov.
\end{abstract}


\section{Introduction}
It is well-known that starting with the Lichnerowicz vanishing theorem, Dirac operators have played important roles in the study of Riemannian metrics of positive scalar curvature on spin manifolds. A notable example is Llarull's rigidity theorem \cite{llarull1998sharp}, which states as follows.

\begin{thm}[Llarull]\label{Llarull}
Let $M$ be a closed spin manifold of dimension $n$ $(\geqslant 2)$, equipped with a Riemannian metric $g^{TM}$. Suppose that there exists a $(1,\Lambda^2)$-contracting map $f$ from $(M,g^{TM})$ to the unit sphere $(\mathbb{S}^{n}, g_0)$ of non-zero degree. Then either there exists a point $x\in M$ where the scalar curvature $k^{TM}(x)< n(n-1)$, or $f$ is an isometry.
\end{thm}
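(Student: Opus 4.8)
We treat the odd-dimensional case $n=2m-1$, which is the substance of the proposal; for $n$ even, Theorem~\ref{Llarull} is Llarull's original argument via the $\hat A$-index of the Dirac operator twisted by $f^{*}S(\mathbb{S}^{n})$, and in odd dimensions the plan is to replace that index by a spectral flow. Write $f=(f_{0},\dots,f_{n})\colon M\to\mathbb{S}^{n}\subset\mathbb{R}^{n+1}$, let $c_{1},\dots,c_{n}$ generate $\mathrm{Cl}(n)$ on $\mathbb{C}^{N}$ ($N=2^{m-1}$, $c_{j}^{*}=-c_{j}$, $c_{j}c_{k}+c_{k}c_{j}=-2\delta_{jk}$), and set $U(\omega)=\omega_{0}\,\mathrm{Id}+\sum_{j\ge1}\omega_{j}c_{j}$, a unitary-valued map on $\mathbb{S}^{n}$ generating $K^{1}(\mathbb{S}^{n})$. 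Put $g:=U\circ f\colon M\to U(N)$ and let $\alpha$ be the flat $\mathfrak{u}(N)$-valued connection form for which the $g$-twisted operator $D^{S(TM)}\otimes\mathrm{Id}+c(\alpha)$ is unitarily conjugate to $D^{S(TM)}\otimes\mathrm{Id}$. The plan is to study the family of twisted Dirac operators $D^{E}_{s}:=D^{S(TM)}\otimes\mathrm{Id}+s\,c(\alpha)$, $s\in[0,1]$, i.e.\ the Dirac operator on $S(TM)\otimes\mathbb{C}^{N}$ twisted by the connection $d+s\alpha$; note $D^{E}_{0}$ and $D^{E}_{1}$ are unitarily conjugate.

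The first ingredient is a Weitzenb\"ock estimate. Lichnerowicz gives $(D^{E}_{s})^{2}=(\nabla^{E_{s}})^{*}\nabla^{E_{s}}+\tfrac14 k^{TM}+\mathfrak{R}^{E_{s}}$, where $\mathfrak{R}^{E_{s}}=\tfrac12\sum_{i<j}c(e_{i})c(e_{j})\otimes F^{E_{s}}(e_{i},e_{j})$ and $F^{E_{s}}=s(1-s)\,d\alpha$ is pulled back through $f$ from a fixed $2$-form on $\mathbb{S}^{n}$ built from $U$. The key pointwise claim, to be proved by a Clifford-algebra computation along the lines of Llarull's original lemma (now carried out for the family $d+s\alpha$ rather than for $f^{*}S(\mathbb{S}^{n})$): since $f$ is area-decreasing and $\mathbb{S}^{n}$ has constant sectional curvature $1$, one has $\mathfrak{R}^{E_{s}}\ge-\tfrac{n(n-1)}{4}\,\mathrm{Id}$ for all $s\in[0,1]$; at $s\in\{0,1\}$ this term vanishes and the inequality is strict for $n\ge2$; and if equality holds at a point $x$ for some $s\in(0,1)$ on a nonzero spinor, then $df_{x}\colon T_{x}M\to T_{f(x)}\mathbb{S}^{n}$ is a linear isometry.

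The second ingredient is topological. Because $D^{E}_{0}$ and $D^{E}_{1}$ are conjugate, they have equal reduced $\eta$-invariants and equal kernel dimensions, so in the spectral-flow formula the two endpoint $\bar\eta$-contributions cancel and only the Chern--Simons transgression survives: $\mathrm{sf}\{D^{E}_{s}\}_{s=0}^{1}=c_{n}\int_{M}\hat{A}(TM)\wedge\mathrm{ch}(g)=c_{n}\,\deg f$, with $c_{n}\neq0$ a universal constant and $\int_{M}\hat{A}(TM)\wedge\mathrm{ch}(g)=\deg f$ since $\mathrm{ch}(g)=f^{*}\mathrm{ch}(U)$ and $\mathrm{ch}(U)$ generates $H^{n}(\mathbb{S}^{n};\mathbb{Q})$. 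Now suppose, toward the conclusion, that $k^{TM}\ge n(n-1)$ everywhere. As $\deg f\neq0$ the spectral flow is nonzero, so the path $\{D^{E}_{s}\}$ cannot consist entirely of invertible operators; pick $s_{0}\in[0,1]$ and $0\neq\psi\in\ker D^{E}_{s_{0}}$. Then $0=\|\nabla^{E_{s_{0}}}\psi\|_{L^{2}}^{2}+\int_{M}\langle(\tfrac14 k^{TM}+\mathfrak{R}^{E_{s_{0}}})\psi,\psi\rangle$, and by the estimate above every summand is $\ge0$, hence each vanishes: $\psi$ is $\nabla^{E_{s_{0}}}$-parallel (so nowhere zero), $k^{TM}\equiv n(n-1)$, and $(\mathfrak{R}^{E_{s_{0}}}+\tfrac{n(n-1)}{4})\psi\equiv0$. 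The last identity forces $s_{0}\in(0,1)$ (at $s_{0}\in\{0,1\}$ it would give $n(n-1)=0$), and then, $\psi$ being nowhere zero, the equality clause makes $df_{x}$ a linear isometry at every $x\in M$. Thus $f$ is a local isometry from the closed manifold $M$ onto $\mathbb{S}^{n}$; since $n\ge2$, $\mathbb{S}^{n}$ is simply connected and $\deg f\neq0$, $f$ is a Riemannian covering, hence an isometry. Therefore, if no point satisfies $k^{TM}<n(n-1)$, then $f$ is an isometry.

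The main obstacle is the topological step: showing that $\mathrm{sf}\{D^{E}_{s}\}_{s=0}^{1}$ is a nonzero multiple of $\deg f$. The naive shortcut of passing to the even-dimensional product $M\times\mathbb{S}^{1}\to\mathbb{S}^{n+1}$ fails because the collapsing map $\mathbb{S}^{n}\times\mathbb{S}^{1}\to\mathbb{S}^{n+1}$ is not area-decreasing, which is exactly why a genuine spectral-flow computation on $M$ is needed; carrying it out requires the variation formula for $\eta$-invariants together with the relevant local index theory on $\widehat{M}=M\times\mathbb{S}^{1}$ (with an adiabatic parameter controlling the size of the $\mathbb{S}^{1}$ factor), and in particular verifying that the two endpoint $\bar\eta$-terms cancel. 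A secondary technical point is the equality analysis of $\mathfrak{R}^{E_{s}}$ for the connection family $d+s\alpha$: one must unwind how the coordinate functions $f_{j}$ enter $d\alpha$ in order to see that extremality of $\mathfrak{R}^{E_{s}}$ still pins down $df$ pointwise. The announced generalization, with $\mathbb{S}^{n}$ replaced by a smooth strictly convex closed hypersurface $\Sigma\subset\mathbb{R}^{n+1}$, is expected to follow the same scheme, with $U$ replaced by a unitary built from the Gauss map of $\Sigma$ and the constant-curvature input replaced by the positive definiteness of the second fundamental form of $\Sigma$.
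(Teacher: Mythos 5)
Your proposal is correct and follows essentially the same route as the paper: the gauge path $\d+s\,g^{-1}\d g$ with $g$ pulled back from the $K^1$-generator built out of Clifford multiplication by the coordinates on $\Sp^{2k-1}$, the Lichnerowicz estimate $\inpro{c(R^E_s)\phi,\phi}\geqslant -s(1-s)\,n(n-1)\abs{\phi}^2$ coming from the $(1,\Lambda^2)$-contracting hypothesis, and the equality analysis at the kernel parameter (necessarily $s=\tfrac12$) plus simple connectivity of the sphere to upgrade the local isometry. The only real divergence is the step you flag as the main obstacle: the paper obtains $\sfl{D^E_s}=-\deg(f)$ directly from Getzler's spectral-flow formula \cite[Theorem 2.8]{getzler1993odd}, combined with the observation that closed forms of intermediate degree on $\Sp^{2k-1}$ are exact, rather than re-deriving it through the $\eta$-invariant variation and adiabatic analysis on $M\times\mathbb{S}^1$ that you sketch.
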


When the dimension $n$ is odd, the classical methods to prove Llarull's theorem need to convert the problem to even dimensions and involve $\Sp^{n+1}$ \cite[pp. 133-134]{gromov2019four}. Gromov asks in \cite[pp. 134]{gromov2019four} whether there is a more direct proof of Llarull's theorem for odd dimension $n$ without direct reference to $\Sp^{n+1}$, and, in particular, by a spectral flow argument. The first part of this paper provides a positive answer to this question. As in \cite{llarull1998sharp}, the theorem is proved by contradiction. However, in odd dimensions, the contradiction is accomplished via the spectral flow of a family of (twisted) Dirac operators rather than the index of a single Dirac operator.

Furthermore, as indicated in \cite{gromov2019four}, the method to prove Theorem \ref{Llarull} can indeed be applied straightforwardly to show the following ``spin-area convex extremality theorem" in odd dimensions, which answers another question in \cite[pp. 131, Question (b)]{gromov2019four} and compares to the result of Goette-Semmelmann \cite{goette2002scalar} and Listing \cite{listing2010scalar} in even dimensions.

\begin{thm}\label{convexsfc}
Let $M$ be a closed spin manifold of odd dimension $2k-1$ $(k\geqslant 2)$ equipped with a Riemannian metric $g^{TM}$, and $X\subset \mathbb{R}^{2k}$ be a smooth strictly convex closed hypersurface equipped with the metric $g_0$ induced by the Euclidean metric in $\mathbb{R}^{2k}$. Suppose that there exists a $(1,\Lambda^2)$-contracting map $f: (M,g^{TM})\rightarrow (X, g_0)$ of non-zero degree, then either there exists a point $x\in M$ where the scalar curvature $k^{TM}(x)< k^{TX}(f(x))$, or $f$ is an isometry.
\end{thm}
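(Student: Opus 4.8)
The plan is to mimic the spectral-flow proof of Theorem \ref{Llarull} in odd dimensions, but replacing the round sphere $\Sp^{2k-1}$ with the convex hypersurface $X$. First I would set up the relevant geometric data on $X$: since $X\subset\R^{2k}$ is a smooth strictly convex closed hypersurface, the Gauss map $\nu\colon X\to\Sp^{2k-1}$ is a diffeomorphism, and the second fundamental form of $X$ (with respect to the inward normal) is positive definite. The key algebraic input from Llarull's argument is that on the sphere one has a canonical bundle map built from the position vector / Gauss map whose Clifford action contributes a universal constant to the Lichnerowicz formula. Here the analogous object is the pullback via $f$ of the bundle $TX$ (or its spinor bundle), twisted by the Gauss map; the curvature of the associated connection will no longer be the constant-curvature form of $\Sp^{2k-1}$ but will instead be controlled pointwise by the principal curvatures of $X$, and this is precisely what produces the comparison value $k^{TX}(f(x))$ instead of $2k(2k-1)$.

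Next I would form, exactly as in the proof of Theorem \ref{Llarull}, the twisted Dirac operator on $M$ acting on $S(TM)\otimes f^*S(TX)$ (using that $\dim X = 2k-1$ is odd, so $S(TX)$ is the irreducible spinor bundle of $X$), and deform it through a one-parameter family — either by scaling the Clifford action of the twisting bundle or by the standard trick of adding a potential term $\varepsilon\, c(V)$ built from the Gauss map, interpolating between an operator whose index/spectral-flow is computed by the non-zero degree of $f$ (hence nonzero) and an operator that, under the hypotheses $k^{TM}\geqslant k^{TX}\circ f$ and $f$ area-decreasing, is shown to be invertible by a Lichnerowicz–Bochner estimate. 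The nonvanishing of the spectral flow forces the Lichnerowicz estimate to degenerate somewhere, i.e.\ the curvature term must vanish at some point and in fact the inequality $k^{TM}(x)\geqslant k^{TX}(f(x))$ must be saturated on a large enough set; a now-standard rigidity analysis (as in \cite{llarull1998sharp, goette2002scalar}) upgrades this to: $f$ is an area-preserving map, hence $\abs{\Lambda^2 df}=1$ everywhere, and combined with the eigenvalue-pinching for the curvature endomorphism one concludes $df$ is a fiberwise isometry, so $f$ is a Riemannian isometry (which in particular forces $M$ to be isometric to $X$).

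The main obstacle, and the place where genuine work beyond the sphere case is needed, is the curvature/Bochner estimate: on $\Sp^{2k-1}$ the twisting curvature is parallel and its Clifford contribution is the single scalar $2k(2k-1)$, whereas for a general convex $X$ the relevant endomorphism $\mcal{R}$ on $S(TM)\otimes f^*S(TX)$ has eigenvalues that vary over $X$ and are governed by symmetric functions of the principal curvatures $\kappa_1,\dots,\kappa_{2k-1}$ of $X$; one must show that the pointwise lower bound of the Clifford action of this curvature, after contraction against the area-decreasing map $f$, is bounded below by $\tfrac14 k^{TX}(f(x)) = \tfrac14\sum_{i<j}\kappa_i\kappa_j$, with equality characterized sharply enough to drive the rigidity. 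This is exactly the estimate that Goette–Semmelmann \cite{goette2002scalar} establish in the even-dimensional setting using the convexity of $X$ (strict convexity giving the needed definiteness), and I would import and adapt their pointwise linear-algebra lemma; the remaining odd-dimensional content — that the deformation has nonzero spectral flow and that invertibility of the endpoint operator yields the contradiction — is then formally identical to the proof of Theorem \ref{Llarull} given earlier in the paper.
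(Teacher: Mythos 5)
Your overall architecture does match the paper's: use that strict convexity makes the Gauss map $\mathbf{G}\colon X\to\Sp^{2k-1}$ a diffeomorphism, run a Lichnerowicz estimate in which the twisting curvature is controlled by the principal curvatures $\lambda_i$ of $X$ so that the lower bound becomes $-\tfrac14 k^{TX}(f(x))$ (note, by the way, that $k^{TX}=2\sum_{i<j}\lambda_i\lambda_j$, so $\tfrac14 k^{TX}=\tfrac12\sum_{i<j}\lambda_i\lambda_j$, not $\tfrac14\sum_{i<j}\lambda_i\lambda_j$), and extract rigidity from saturation of the area-contracting inequality. The genuine gap is at the heart of the odd-dimensional argument: you never exhibit a concrete family of self-adjoint operators whose spectral flow is computable and equal to $\pm\deg(f)$. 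Twisting by $f^\ast S(TX)$ with its Levi-Civita connection produces a single operator carrying no odd-dimensional index, and the deformations you float --- rescaling the Clifford action of the twist, or adding a potential $\varepsilon\,c(V)$ ``built from the Gauss map'' --- are not shown to have spectral flow $\deg(f)$ (and $D+\varepsilon c(V)$ is not even self-adjoint as written, since Clifford multiplication by a vector is skew-adjoint). The paper's mechanism is Getzler's odd Chern character: the twisting bundle is the pull-back under $\mathbf{G}\circ f$ of the \emph{trivial} bundle with fibre $S_{2k,+}$, carrying the affine path of unitary connections $\d+s\,g^{-1}[\d,g]$ attached to the unitary $g(x)=\bar{c}_{2k}\bar{c}(x)$; equivalently $D^E_s=D^E_0+s\,g_M^{-1}[D^E_0,g_M]$ with $g_M=(\mathbf{G}\circ f)^\ast g$, and Getzler's theorem gives $\sfl{D^E_s}=-\deg(\mathbf{G}\circ f)=-\deg(\mathbf{G})\deg(f)=-\deg(f)$. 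Without this (or an equivalent explicit family) the topological side of the contradiction is unsupported.

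Two further points. First, the logic of the contradiction is misstated: spectral flow is an invariant of the whole family, not of an endpoint, so the hypotheses must force invertibility of $D^E_s$ for \emph{every} $s\in[0,1]$, not just at one end; for $s\neq\tfrac12$ this follows from $s(1-s)<\tfrac14$, while the borderline case $s=\tfrac12$ needs the strict inequality $k^{TM}(x)>k^{TX}(f(x))$ at one point together with the unique continuation property of Dirac operators. Second, your curvature estimate via the Goette--Semmelmann lemma would presumably work, but the paper obtains it more directly: diagonalize the shape operator, apply Proposition \ref{norm2form} to each $f^\ast(\epsilon^i\wedge\epsilon^j)$ using the $(1,\Lambda^2)$-contracting hypothesis, and sum; and in the rigidity step, equality yields only a local isometry, which is upgraded to a global one using that $X$ is simply connected (again because $\mathbf{G}$ is a diffeomorphism onto $\Sp^{2k-1}$), a step your sketch omits.
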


This paper is organized as follows. In Section 2, we briefly review some preliminaries on the Clifford algebra and the Hermitian space of complex spinors, and in particular, construct a family of unitary connections on a trivial vector bundle over the unit sphere $\Sp^{2k-1}$ as in \cite{getzler1993odd}. Then, in Section 3, we provide a direct proof for Llarull's rigidity theorem in odd dimensions via spectral flow in the following procedure. Based on the family of unitary connections, we construct a family $\{D_s^E:s \in [0,1]\}$ of twisted Dirac operators on the manifold $M$ whose spectral flow $\sfl{D_s^E:s\in [0,1]}$ can be computed explicitly, while on the other hand, the geometric conditions guarantee their invertibility. Finally, in Section 4, we prove Theorem \ref{convexsfc} by adapting the method in previous sections to the case when the sphere $\Sp^{2k-1}$ is replaced by a smooth strictly convex closed hypersurface $X\subset \mathbb{R}^{2k}$.

\section{Algebraic preliminaries}
In this section, we briefly review the algebraic preliminaries that will be involved throughout this paper.
\subsection{The exterior algebra}
Let $(V, \inpro{\cdot,\cdot})$ be a real vector space endowed with an inner product $\inpro{\cdot,\cdot}$. In the dual space $V^\ast$ of $V$, given any $\alpha^1, \alpha^2 \in V^\ast$, by the Riesz representation, there exist uniquely $u_{1}, u_2\in V$, such that for all $v\in V$, $\alpha_i(v)=\inpro{u_i,v}$, $i=1,2$, and the inner product on $V^\ast$ is defined by
\[
\inpro{\alpha^1,\alpha^2}=\inpro{u_1,u_2}.
\]
The inner product $\inpro{\cdot,\cdot}$ on $V^\ast$ can be extended to the exterior algebra $\Lambda^\ast (V^\ast)$ in the following way. Firstly, given any monomials $\omega_1=\alpha^1\wedge \alpha^2 \wedge \cdots \wedge \alpha^p$ and $\omega_2=\beta^1\wedge \beta^2 \wedge \cdots \wedge \beta^p$ of the same degree $p$, we define
\begin{equation}\label{metric2form}
\inpro{\omega_1, \omega_2}=\det({\inpro{\alpha^i,\beta^j}}),
\end{equation}
and extend it linearly to $\Lambda^\ast (V^\ast)$ via the direct sum $\Lambda^\ast (V^\ast)=\bigoplus_{p} \Lambda^p (V^\ast)$. And the induced norm on $\Lambda^\ast (V^\ast)$ is determined by $\abs{\omega}=\inpro{\omega,\omega}^{\frac{1}{2}}$.

Note that, on a Riemannian manifold $(M,g^{TM})$, the metric $g^{TM}$ induces a pointwise norm $\abs{\cdot}$ for differential forms in the same way. 

\begin{defi}
A smooth map $f :M \rightarrow N$ between Riemannian manifolds $M$ and $N$ is called $(\varepsilon, \Lambda^p)$-contracting $(\varepsilon>0)$ with respect to the norm $\abs{\cdot}$ if
\begin{equation}\label{pcontracting}
\abs{f^\ast \omega}\leqslant \varepsilon \abs{\omega}
\end{equation}
holds for all $\omega\in \Omega^p(N)$.
\end{defi}

\subsection{The space of spinors}
Given a real vector space $V$ with an inner product $\inpro{\cdot,\cdot}$, the Clifford algebra $Cl(V, \inpro{\cdot,\cdot})$ is generated over $\mathbb{R}$ by $V$ with the commutation relations
\[
v \cdot w+w\cdot v=-2\inpro{v,w}.
\]
In particular, let $\mathbb{R}^{n}$ be the canonical oriented $n$-dimensional Euclidean space with the canonical inner product, which generates the Clifford algebra $Cl(\mathbb{R}^{n})$. Let $\bar{c}$ denote the action of the (complexified) Clifford algebra $\C l(\mathbb{R}^{n}):= Cl(\mathbb{R}^n) \otimes_{\mathbb{R}} \mathbb{C}$ on $S_{n}$\footnote{The notation $\bar{c}(\cdot)$ is intentionally used here throughout this section to distinguish it from the action $c(\cdot)$ of the Clifford bundle $\mathbb{C}l(TM)$ on the spinor bundle $S(TM)$ in the following sections.}, a complex Hermitian space of spinors whose dimension is $2^{[\frac{n}{2}]}$. We equip $S_{n}$ with the Hermitian metric $\inpro{\cdot, \cdot}_{S_n}$ such that any vector of unit length in $\mathbb{R}^{n}$ acts on $S_{n}$ isometrically\footnote{The existence of such a metric is guaranteed by \cite[Proposition I.5.16]{lawson2016spin}.}. Let $\{\partial_1 ..., \partial_{n}\}$ be the canonical oriented orthonormal basis of $\mathbb{R}^{n}$. For convenience, we denote $\bar{c}(\partial_i)$ by $\bar{c}_i$.

When $n=2k$, by \cite{ABS}, $\C l(\mathbb{R}^{2k})$ identifies with $\End(S_{2k})$. In addition, we set $\tau = (-1)^{k} \bar{c}_1 \bar{c}_2 \cdots \bar{c}_{2k}$. Then $\tau^2= 1$. Define $S_{2k,\pm} = \{s\in S_{2k}; \tau s = \pm s\}$. Then $S_{\pm, 2k}$ has dimension $2^{k-1}$, and $S_{2k} =S_{2k,+}\oplus S_{2k,-}$. Furthermore, given any vector $\mathbf{v}\in \mathbb{R}^{2k}$, 
\[
\bar{c}(\mathbf{v}) (S_{2k,\pm})\subseteq S_{2k,\mp}.
\]

Another thing that needs to be stated here is the Clifford action of $2$-forms $\alpha\in \Lambda^2(\mathbb{R}^n)$. In terms of the orthonormal basis $\{\partial_i\}_{i=1}^{n}$, its Clifford action is determined by
\begin{equation}\label{clff2form}
\bar{c}(\alpha):=\frac{1}{2}\sum_{i,j=1}^{n} \alpha(\partial_i,\partial_j)\bar{c}_i\bar{c}_j=\sum_{1\leqslant i<j\leqslant n}\alpha(\partial_i,\partial_j)\bar{c}_i 
\bar{c}_j.
\end{equation}
On a Riemannian manifold, the Clifford action of a $2$-form can be determined similarly in terms of any local orthonormal frame of its tangent bundle.

\subsection{A trivial vector bundle over the sphere $\Sp^{2k-1}$}

In the Euclidean space $\mathbb{R}^{2k}$ equipped with the canonical metric, we regard the $(2k-1)$-dimensional unit sphere $(\mathbb{S}^{2k-1},g_0)$ as a subset of the $\mathbb{R}^{2k}$, so that $g_0$ is the naturally induced metric. For any point $x\in \mathbb{S}^{2k-1}\subset \mathbb{R}^{2k}$, we write $x=(x^1,x^2,\dots, x^{2k})$, and let $E_0$ be the trivial bundle $\Sp^{2k-1}\times S_{2k,+}$. Then the space $C^\infty(M, E_0)$ of smooth sections of $E_0$ can be identified with the space $C^\infty(\Sp^{2k-1}, S_{2k,+})$ of $S_{2k,+}$-valued smooth functions on $\Sp^{2k-1}$. And we equip $E_0$ with a trivial connection $\d$ such that for any smooth function $f\in C^\infty(M)$ and $\psi\in S_{2k,+}$,
\[
\d (f\cdot \psi)=\d f \otimes \psi.
\]

Here we consider a function $g: \Sp^{2k-1} \rightarrow \End(S_{2k,+})$ that is defined in \cite{getzler1993odd} by
\begin{equation}
g(x)=\bar{c}_{2k}\bar{c}(x)=\sum_{i=1}^{2k} x^i \cdot \bar{c}_{2k}\bar{c}_i.
\end{equation}
Let $g$ act as a smooth automorphism of $E_0$ and then we define $\omega \in \Omega^1(\Sp^{2k-1},\End(E_0))$ by $\omega=g^{-1} [\d,g]$. By direct computation, it is determined by
\begin{equation}\label{cntform}
\omega(x)=\bar{c}(x) \bar{c}_{2k} \bar{c}_{2k} \sum_{i=1}^{2k} \d x^i\otimes \bar{c}_i=-\sum_{i=1}^{2k}\d x^i \otimes \bar{c}(x) \bar{c}_i, \qquad \forall x\in \Sp^{2k-1}.
\end{equation}

Now we define a family $\{\nabla_{s}:s\in [0,1]\}$ of connections on $E_{0}$ by
\begin{equation}\label{cnntsonsph}
\nabla_{s}=\d+s g^{-1} [\d,g]=\d+s\omega.
\end{equation}
We denote by $\inpro{\cdot, \cdot}_{S_{2k,+}}$ the restriction of the Hermitian metric $\inpro{\cdot, \cdot}_{S_{2k}}$ on $S_{2k,+}$ and extend it trivially to be a Hermitian metric $\inpro{\cdot, \cdot}_{E_0}$ on the bundle $E_0$. The key point here is that the construction \eqref{cnntsonsph} actually defines a family of unitary connections on the vector bundle $E_0$.

\begin{lem}\label{unitarycnct}
For all $s\in[0,1]$, $\nabla_{s}$ is compatible with the metric $\inpro{\cdot, \cdot}_{E_0}$ on $E_0$, i.e. given any $\phi,\psi \in C^\infty(\Sp^{2k-1}, S_{2k,+})$,
\[
\d \inpro{\phi,\psi}_{E_0}=\inpro{\nabla_{s}\phi, \phi}_{E_0}+\inpro{\phi, \nabla_{s}\psi}_{E_0}.
\]
\end{lem}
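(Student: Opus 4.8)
The plan is to reduce the statement to the single fact that, for each $x\in\Sp^{2k-1}$, the fibrewise automorphism $g(x)$ is \emph{unitary} on $S_{2k,+}$ with respect to $\inpro{\cdot,\cdot}_{S_{2k,+}}$. Granting this, the endomorphism‑valued $1$-form $\omega=g^{-1}[\d,g]$ will take values in skew‑Hermitian endomorphisms, and then the compatibility of $\nabla_s=\d+s\omega$ is immediate: the reference connection $\d$ is compatible with the constant Hermitian metric on the trivial bundle $E_0$, and adding a skew‑Hermitian‑valued $1$-form preserves compatibility.

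First I would record the adjoint behaviour of Clifford multiplication under $\inpro{\cdot,\cdot}_{S_{2k}}$. By the choice of Hermitian metric, every unit vector $v\in\mathbb{R}^{2k}$ acts isometrically, so $\bar c(v)$ is unitary; combined with the Clifford relation $\bar c(v)^2=-|v|^2=-1$, this forces $\bar c(v)^\ast=-\bar c(v)$, i.e. $\bar c(v)$ is skew‑adjoint. Applying this to $v=\partial_{2k}$ and to $v=x$ (which is a unit vector since $x\in\Sp^{2k-1}$), and recalling that $\bar c(\partial_{2k})$ and $\bar c(x)$ each interchange $S_{2k,+}$ and $S_{2k,-}$, I conclude that $g(x)=\bar c_{2k}\bar c(x)$ preserves $S_{2k,+}$ and that
\[
g(x)^\ast=\bar c(x)^\ast\,\bar c_{2k}^\ast=\bar c(x)\,\bar c_{2k},\qquad g(x)\,g(x)^\ast=\bar c_{2k}\,\bar c(x)\,\bar c(x)\,\bar c_{2k}=-\bar c_{2k}^{\,2}=1 .
\]
Hence $g(x)$ is a unitary automorphism of the fibre $S_{2k,+}$, so $g$ is a unitary automorphism of $E_0$ with $g^{-1}=g^\ast$ pointwise.

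Next I would differentiate $g^\ast g=\mathrm{Id}$. Since $\inpro{\cdot,\cdot}_{E_0}$ is the constant metric on a trivial bundle, $\d$ commutes with the pointwise adjoint, so $(\d g)^\ast=\d(g^\ast)$ and $(g^{-1})^\ast=(g^\ast)^{-1}=g$; differentiating $g^\ast g=\mathrm{Id}$ yields $(\d g^\ast)\,g=-g^\ast(\d g)$. Because $[\d,g]=\d g$ as an $\End(E_0)$-valued $1$-form, this gives
\[
\omega^\ast=\bigl(g^{-1}[\d,g]\bigr)^\ast=(\d g)^\ast\,(g^{-1})^\ast=(\d g^\ast)\,g=-g^\ast(\d g)=-g^{-1}(\d g)=-\omega ,
\]
so $\omega$, and hence $s\omega$ for every $s\in[0,1]$, is skew‑Hermitian‑valued. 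Finally, for $\phi,\psi\in C^\infty(\Sp^{2k-1},S_{2k,+})$, compatibility of $\d$ gives $\d\inpro{\phi,\psi}_{E_0}=\inpro{\d\phi,\psi}_{E_0}+\inpro{\phi,\d\psi}_{E_0}$, while $\inpro{s\omega\phi,\psi}_{E_0}+\inpro{\phi,s\omega\psi}_{E_0}=\inpro{s\omega\phi,\psi}_{E_0}+\inpro{(s\omega)^\ast\phi,\psi}_{E_0}=0$; adding these yields the asserted identity for $\nabla_s$.

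I do not expect a genuine obstacle: the only point demanding care is the bookkeeping of adjoints of products of Clifford elements, together with the observation that $\d$ commutes with the pointwise adjoint, which holds precisely because $E_0$ is trivial and carries the constant metric. One could alternatively avoid the explicit differentiation by noting that for $y$ near $x$ the operator $g(x)^{-1}g(y)=\mathrm{Id}+\omega(x)[y-x]+O(|y-x|^2)$ is a product of unitaries, hence unitary, forcing $\omega(x)$ to be skew‑Hermitian; but the computation above is more transparent and self‑contained.
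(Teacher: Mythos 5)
Your argument is correct; the only fact it uses that needs care — that $g(x)=\bar c_{2k}\bar c(x)$ preserves $S_{2k,+}$ and is unitary there, and that $(\d g)^\ast=\d(g^\ast)$ for the constant metric on the trivial bundle — is handled properly, and the chain $\omega^\ast=(\d g^\ast)g=-g^\ast\d g=-\omega$ is valid. Where you differ from the paper is in how the $s$-dependence is treated: the paper never computes the adjoint of $\omega$ at all. It checks compatibility only for the two endpoint connections, $\nabla_0=\d$ via the Leibniz rule and $\nabla_1=g^{-1}\circ\d\circ g$ by moving the pointwise isometry $g$ across the Hermitian metric, and then observes that $\nabla_s=(1-s)\nabla_0+s\nabla_1$ is an affine combination of metric connections, hence metric. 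You instead establish the standard criterion that the connection $1$-form is skew-Hermitian, by differentiating $g^\ast g=\mathrm{Id}$, after which compatibility of $\d+s\omega$ is immediate. Your route requires the extra bookkeeping of adjoints of Clifford products but yields slightly more: $\omega$ is skew-Hermitian-valued, so $\d+t\omega$ is metric for every real $t$, not just for the convex range $[0,1]$; the paper's route avoids differentiating adjoints altogether and is marginally shorter, at the cost of being special to affine interpolations between two connections already known to be metric.
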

\begin{proof}
For the trivial connection $\nabla_0=\d$, it follows from the Leibniz rule that
\[
\d\inpro{\phi,\psi}_{E_0}=\inpro{\d \phi, \psi}_{E_0}+\inpro{\phi,\d  \psi}_{E_0}.
\]
On the other hand, note that the Clifford action of any vector of unit length in $\mathbb{R}^{2k}$ is an isometry on $(S_{2k},\inpro{\cdot,\cdot}_{S_{2k}})$, which implies that, for all $x\in \Sp^{2k-1}$, $g (x)$ is an isometry on $(S_{2k,+},\inpro{\cdot,\cdot}_{S_{2k,+}})$. Therefore, for the connection $\nabla_{1}=g^{-1}\circ \d\circ g$, one has
\[
\begin{split}
&\inpro{(g^{-1}\circ\d \circ g) \phi, \psi}_{E_0}+\inpro{\phi,(g^{-1}\circ \d \circ g) \psi}_{E_0}\\
=&\inpro{\d(g \phi), g\psi}_{E_0}+\inpro{g\phi,\d (g  \psi)}_{E_0}\\
=&\d\inpro{g\phi, g\psi}_{E_0}=\d\inpro{\phi,\psi}_{E_0}.
\end{split}
\]
Then, for all $s\in [0,1]$, noticing that $\nabla_{s}=(1-s)\d + s \nabla_{1}$, it follows that
\[
\begin{split}
\inpro{\nabla_{s} \phi, \psi}_{E_0}+\inpro{\phi,\nabla_{s}  \psi}_{E_0}=&(1-s)\left( \inpro{\d \phi, \psi}_{E_0}+\inpro{\phi,\d \psi}_{E_0}\right)+s\left(\inpro{\nabla_{1} \phi, \psi}_{E_0}+\inpro{\phi,\nabla_{1}  \psi}_{E_0} \right)\\
=&[s+(1-s)]\d \inpro{\phi,\psi}_{E_0}\\
=&\d \inpro{\phi,\psi}_{E_0}.
\end{split}
\]
\end{proof}

Let $F_s$ be the curvature operator $\nabla_s^2$, and it is given in \cite{getzler1993odd} by the formula
\[
F_s=-s(1-s)\omega^2.
\]
At any point $x\in \Sp^{2k-1} \subset \mathbb{R}^{2k}$, we identify $T_x \mathbb{R}^{2k}$ with $\mathbb{R}^{2k}$ in the canonical way, so that its subspace $T_x \Sp^{2k-1}$ is identified with the subspace $\{x\}^\perp$ of $\mathbb{R}^{2k}$. Over an open set $U\subset M$, we take a local orthonormal frame $\{\epsilon_i\}_{i=1}^{2k-1}$ of $T\Sp^{2k-1}$. Then, for each $\epsilon_i$ and $x\in U$, we have that
\[
\omega(x)(\epsilon_i)=-\sum_{j=1}^{2k}  \d x^j(\epsilon_i) \cdot \bar{c}(x)  \bar{c}_j=-\sum_{j=1}^{2k}  \inpro{\epsilon_i,\partial_j} \cdot \bar{c}(x)  \bar{c}_j
=-\bar{c}(x)  \bar{c}(\epsilon_i),
\]
and it then follows for all $i\neq j$, that
\[
\begin{split}
\omega^2(x)(\epsilon_i, \epsilon_j)=&\omega(x)(\epsilon_i) \omega(x)(\epsilon_j)-\omega(x)(\epsilon_j)\omega(x)(\epsilon_i)\\
=&\bar{c}(x) \bar{c}(\epsilon_i)\bar{c}(x) \bar{c}(\epsilon_j)-\bar{c}(x) \bar{c}(\epsilon_j)\bar{c}(x) \bar{c}(\epsilon_i)
=2 \bar{c}(\epsilon_i)\bar{c}(\epsilon_j).
\end{split}
\]
Therefore,
\[
F_s(\epsilon_i, \epsilon_j)=-2s(1-s)\bar{c}(\epsilon_i)\bar{c}(\epsilon_j).
\]
So, if we denote by $\{\epsilon^i\}_{i=1}^{2k-1}$ the dual frame of $\{\epsilon_i\}_{i=1}^{2k-1}$, then
\begin{equation}\label{curvfs}
F_s=-2s(1-s)\sum_{i<j}(\epsilon^i\wedge \epsilon^j)\otimes \bar{c}(\epsilon_i)\bar{c}(\epsilon_j).
\end{equation}

\section{Proof of Llarull's theorem in odd dimensions}
In this section, we present a new proof of Llarull's theorem \cite{llarull1998sharp} in odd dimensions via a spectral flow argument. Let $(M,g^{TM})$ be a $(2k-1)$-dimensional closed spin Riemannian manifold, and denote by $k^{TM}$ the associated scalar curvature. Assume that $f: (M,g^{TM}) \to (\Sp^{2k-1},g_0)$ is a smooth $(1,\Lambda^2)$-contracting map of nonzero degree. We start by constructing a family $\{D_s^E: s\in [0,1]\}$ of twisted Dirac operators, whose spectral flow is $\deg (f)\neq 0$ by explicit computation. Then we show that the assumptions on $k^{TM}$ imply the vanishing of spectral flow, which leads to the contradiction. Finally, we show the rigidity result, i.e. $k^{TM}(x)\equiv (2k-1)(2k-2)$ implies that $f$ maps $(M,g^{TM})$ isometrically to the unit sphere $(\Sp^{2k-1},g_0)$.

\subsection{A family of Dirac operators}
On the manifold $M$, we denote by $S$ its spinor bundle $S(TM)$ and equip it with the connection $\nabla^S$ induced by the Levi-Civita connection on the tangent bundle $TM$. In addition, let $E=f^\ast E_{0}$ be the pull-back of the trivial vector bundle $E_{0}$ over the sphere $\Sp^{2k-1}$, and equip it with the pull-back metric. The $\{f^\ast \nabla_s: s\in [0,1]\}$ is a family of unitary connections. And, for each $s\in [0,1]$, the curvature $R^E_s$ associated to $f^\ast\nabla_s$ is determined by
\begin{equation}\label{curvsp}
R^E_s=(f^\ast \nabla_s)^2=f^\ast(\nabla_s)^2=f^\ast F_s.
\end{equation}

From now on, we consider the twisted bundle $S\otimes E$ over the manifold $M$ with the tensor product metric and equip it with a family $\{\nabla^{S\otimes E}_s\}_{s\in [0,1]}$ of unitary connections defined by
\[
\nabla^{S\otimes E}_s= \nabla^S\otimes 1+1\otimes f^\ast \nabla_{s}.
\]
We denote by $c(\cdot)$ the action of the Clifford bundle $\mathbb{C}l(TM)$ on the spinor bundle $S$. Moreover, for any vector field $X\in C^\infty(M,TM)$, we extend $c(X)$ to an action on $S\otimes E$ by acting as identity on $E$, and still denote it by $c(X)$. The above family of unitary connections then induces a family of twisted Dirac operators $D^E_s: C^\infty(M,S\otimes E) \rightarrow C^\infty(M,S\otimes E)$, which, in terms of a local orthonormal frame $\{e_i\}_{i=1}^{2k-1}$ of $TM$, is determined by
\[
D^E_s=\sum_{i=1}^{2k-1} c(e_i) \nabla^{S\otimes E}_{s,e_i}.
\]

For this family of twisted Dirac operators, it follows from the Lichnerowicz formula that
\begin{equation}\label{lichnerowicz}
\de2=\nabla^{S\otimes E,\ast}_s \nabla^{S\otimes E}_s+\frac{k^{TM}}{4}+c(R^{E}_s).
\end{equation} 
Furthermore, in terms of a local orthonormal frame $\{e_i\}_{i=1}^{2k-1}$ of $TM$, the connection Laplacian $\nabla^{S\otimes E,\ast}_s \nabla^{S\otimes E}_s$ has the expression as
\[
\nabla^{S\otimes E,\ast}_s \nabla^{S\otimes E}_s=-\sum_{i=1}^{2k-1} \nabla^{S\otimes E}_{s,e_i} \nabla^{S\otimes E}_{s,e_i}+\nabla^{S\otimes E}_{s, \nabla^{TM}_{e_i} e_i},
\]
and $c(R^{E}_s)$ is determined by
\begin{equation}\label{clif2fm}
c(R^{E}_s)=\sum_{i<j} c(e_i)c(e_j) \otimes R^{E}_s(e_i, e_j).
\end{equation}

\subsection{Spectral flow of the family $\{D^E_{s}: s\in [0,1]\}$}
Before computing the spectral flow of the family $\{D_s^E, s\in [0,1]\}$, we recall the definition of spectral flow for a family of self-adjoint Fredholm operators in \cite{APS3}.
\begin{defi}\label{spectralflow}
For $\{D^E_s: s\in [0,1]\}$, as a smooth family of self-adjoint Fredholm operators, the spectral flow $\sfl{D_s^E: s\in [0,1]}$, which is also denoted by $\sfl{D_s^E}$ for short, counts the net number of eigenvalues of $D_s^E$ which change sign when the parameter $s$ varies from $0$ to $1$.
\end{defi}
\begin{rmk}
Based on this definition, if every single operator $D_s^E$ in such a family is invertible, there cannot be any eigenvalue changing its sign when the parameter $s$ varies from $0$ to $1$, and therefore, the spectral flow $\sfl{D_s^E}$ is forced to vanish.
\end{rmk}
The spectral flow is computed directly with the formula from \cite[Theorem 2.8]{getzler1993odd} as follows.
\begin{prop}
The spectral flow $\sfl{D^E_s}$ is given by
\begin{equation}\label{sf}
\sfl{D^E_{s}}=-\deg(f).
\end{equation}

\end{prop}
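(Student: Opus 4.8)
The plan is to reduce the computation of $\sfl{D^E_s}$ to the Atiyah--Patodi--Singer-type spectral flow formula of Getzler (\cite[Theorem 2.8]{getzler1993odd}), which expresses the spectral flow of a path of self-adjoint Dirac-type operators joining $D^E_0$ to $D^E_1$ in terms of a transgressed characteristic class, and then to identify the resulting integral with $-\deg(f)$. First I would observe that the endpoints of the family are unitarily equivalent: since $\nabla_1 = g^{-1}\circ \d \circ g$ on $E_0$, the pulled-back connection $f^\ast \nabla_1$ is conjugate by the unitary automorphism $f^\ast g$ of $E$ to $f^\ast\nabla_0 = \d$; hence $D^E_1 = (f^\ast g)^{-1} D^E_0 (f^\ast g)$, so the path $\{D^E_s\}$ is a loop up to this gauge transformation and its spectral flow is well-defined and computed by Getzler's formula applied to the automorphism $u := f^\ast g : M \to U(N)$ (here $N = 2^{k-1}$).

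Next I would write down Getzler's formula in the present setting. For a path of twisted Dirac operators on an odd-dimensional closed spin manifold $M^{2k-1}$ with endpoints conjugate by $u$, the spectral flow equals (up to a universal normalizing constant) the odd Chern character pairing
\[
\sfl{D^E_s} = -\int_M \widehat{A}(TM)\,\wedge\,\mathrm{ch}(u),
\]
where $\mathrm{ch}(u)$ is the odd Chern character form of $u$ built from $u^{-1}\d u$ (the transgression of the even Chern character of the corresponding connection family, which is exactly the curvature computation $F_s = -s(1-s)\omega^2$ recorded in \eqref{curvfs}). Since $M$ has dimension $2k-1$, only the top-degree component survives, and the $\widehat{A}$-genus contributes only its degree-zero term $1$; so the formula collapses to $-\int_M$ of the degree-$(2k-1)$ part of $\mathrm{ch}(u) = \mathrm{ch}(f^\ast g)$. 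By naturality this equals $-\int_M f^\ast\big(\mathrm{ch}(g)_{[2k-1]}\big)$, where $\mathrm{ch}(g)_{[2k-1]}$ is the top-degree component of the odd Chern character of the generator $g : \Sp^{2k-1} \to \End(S_{2k,+}) = \mathrm{Mat}_N(\C)$ on the sphere.

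The final step is to evaluate the sphere integral $\int_{\Sp^{2k-1}} \mathrm{ch}(g)_{[2k-1]}$. This is precisely the statement that $g$ represents a generator of $K^1(\Sp^{2k-1}) \cong \Z$ whose odd Chern character integrates to $1$ (equivalently, $g$ has winding number $1$); this is the classical Bott-type computation, and indeed it is the content of the construction in \cite{getzler1993odd}, where $g(x) = \bar c_{2k}\bar c(x)$ is designed to be exactly such a generator. Therefore $\int_M f^\ast(\mathrm{ch}(g)_{[2k-1]}) = \deg(f)\cdot\int_{\Sp^{2k-1}}\mathrm{ch}(g)_{[2k-1]} = \deg(f)$, which yields $\sfl{D^E_s} = -\deg(f)$.

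The main obstacle I anticipate is bookkeeping of the normalization constants: getting the sign and the numerical factor in Getzler's formula exactly right (the $(2\pi i)$-powers, the factorial from $\mathrm{ch}$, and the orientation conventions for $M$, $\Sp^{2k-1}$, and the Clifford action $\bar c$ versus $c$), and cleanly justifying that $\widehat{A}(TM)$ drops out and that no lower-degree transgression terms contribute because of the dimension constraint. A secondary point requiring care is confirming that the family is genuinely a path of self-adjoint \emph{Fredholm} (here elliptic on a closed manifold, hence automatically Fredholm) operators with the endpoints related by the explicit gauge $f^\ast g$, so that Getzler's hypotheses apply verbatim; Lemma \ref{unitarycnct} and \eqref{curvsp} supply everything needed for this.
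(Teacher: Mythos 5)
Your proposal follows essentially the same route as the paper: apply Getzler's spectral flow formula \cite[Theorem 2.8]{getzler1993odd} to the path $D^E_s=D^E_0+s\,u^{-1}[D^E_0,u]$ with $u=f^\ast g$, reduce the resulting integral to the top-degree component of the odd Chern character of $g$, pull out $\deg(f)$, and quote Getzler's normalization $-\left(\tfrac{1}{2\pi\sqrt{-1}}\right)^k\int_{\Sp^{2k-1}}\ch(g,\d)=-1$. The one step where your stated justification does not work is the claim that $\hat{A}(TM,\nabla^{TM})$ ``drops out because of the dimension constraint.'' Dimension alone only forces the integrand to be the degree-$(2k-1)$ part of the product $\hat{A}(TM,\nabla^{TM})\,f^\ast\ch(g,\d)$, and for $k\geqslant 3$ this still contains cross terms such as $\hat{A}_{[4j]}\wedge f^\ast\ch(g,\d)_{[2k-1-4j]}$ with $j\geqslant 1$, which are not excluded by degree counting. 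The correct reason, and the one the paper uses, is cohomological: $\ch(g,\d)$ is a closed form on $\Sp^{2k-1}$, and every closed form of positive degree strictly less than $2k-1$ on the sphere is exact; hence each lower-degree component pulls back to an exact form on $M$, and its wedge with the closed form $\hat{A}(TM,\nabla^{TM})$ integrates to zero by Stokes. Only $\hat{A}_{[0]}=1$ against $f^\ast[\ch(g,\d)]^{\mathrm{max}}$ survives, which gives $\deg(f)\int_{\Sp^{2k-1}}\ch(g,\d)$ and completes the argument exactly as you intend. With that substitution your proof matches the paper's.
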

\begin{proof}
Firstly, by \cite[Theorem 2.8]{getzler1993odd}, the spectral flow is given by the formula
\[
\sfl{D^E_s}=-\left(\frac{1}{2\pi \sqrt{-1}}\right)^k\int_M \hat{A}(TM,\nabla^{TM}) f^\ast \ch(g,\d),
\]
where $\ch(g,\d)\in \Omega^\textrm{odd} (\Sp^{2k-1})$ is the odd Chern character form defined by
\[
\ch(g,\d)=\sum_{n=0}^{k-1}\frac{n!}{(2n+1)!}\tr{(g^{-1}\d g)^{2n+1}}.
\]
Note that $\ch(g,\d)$ is a closed differential form (c.f. \cite{zhang2001lectures}) on the sphere $\Sp^{2k-1}$ and that every closed form on the sphere $\Sp^{2k-1}$ of a positive degree strictly less than $(2k-1)$ is exact. It then follows that
\[
\begin{split}
\int_M \hat{A}(TM,\nabla^{TM}) f^\ast \ch(g,\d)=\int_M f^\ast [\ch(g,\d)]^{\textrm{max}}
=\deg(f)\int_{\Sp^{2k-1}}\ch(g,\d),
\end{split}
\]
where $[\ch(g,\d)]^{\textrm{max}}$ denotes the top degree term of $\ch(g,\d)$.

Furthermore, according to the computation in \cite{getzler1993odd}, the integral
\[
-\left(\frac{1}{2\pi \sqrt{-1}}\right)^k \int_{\Sp^{2k-1}}\ch(g,\d)=-1,
\]
and therefore
\[
\sfl{D^E_{s}}=-\deg(f).
\]
\end{proof}

\subsection{Vanishing of the spectral flow}
In this section, we assume that the scalar curvature $k^{TM}\geqslant (2k-1)(2k-2)$ all over the manifold $M$ and $k^{TM}(x)> (2k-1)(2k-2)$ at some $x\in M$, and derive from this assumption an estimate of the ``twisted curvature" $c(R^{E}_s)$, which implies the invertibility of the operator $D_s^E$ for all $s\in [0,1]$.

\begin{thm}\label{invsph}
Assume that the scalar curvature $k^{TM}\geqslant (2k-1)(2k-2)$ all over the manifold $M$ and $k^{TM}(x)> (2k-1)(2k-2)$ at some $x\in M$, then the operator $D^E_{s}$ is invertible for all $s\in [0,1]$.
\end{thm}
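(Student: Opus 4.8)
The plan is to feed the geometric hypotheses into the Lichnerowicz formula \eqref{lichnerowicz}, $(D^E_s)^2=\nabla^{S\otimes E,\ast}_s\nabla^{S\otimes E}_s+\frac{k^{TM}}{4}+c(R^E_s)$, and to show that the zeroth-order term $\frac{k^{TM}}{4}+c(R^E_s)$ is a non-negative bundle endomorphism which is moreover positive definite at the point $x$. Granting this, I would take $\psi\in\Ker{D^E_s}$ (smooth, by elliptic regularity), pair the formula with $\psi$, and integrate over the closed manifold $M$ to obtain
\[
0=\norm{\nabla^{S\otimes E}_s\psi}_{L^2}^2+\int_M\Big\langle\big(\tfrac{k^{TM}}{4}+c(R^E_s)\big)\psi,\psi\Big\rangle.
\]
Both terms being non-negative, both vanish; since $\nabla^{S\otimes E}_s$ is unitary (Lemma \ref{unitarycnct} and metricity of $\nabla^S$), $\nabla^{S\otimes E}_s\psi\equiv0$ forces $\abs{\psi}$ to be constant, while the second integrand, a continuous non-negative function of zero integral, vanishes identically, in particular at $x$ where $\frac{k^{TM}}{4}+c(R^E_s)$ is positive definite; hence $\psi(x)=0$, so $\abs{\psi}\equiv0$ and $\psi\equiv0$. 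As $D^E_s$ is self-adjoint and elliptic on a closed manifold, triviality of its kernel yields invertibility.

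Everything then reduces to the pointwise estimate, which I would carry out as follows. Fix $p\in M$ and use the singular value decomposition of $df_p\colon T_pM\to T_{f(p)}\Sp^{2k-1}$ to choose orthonormal bases $\{e_i\}_{i=1}^{2k-1}$ of $T_pM$ and $\{\epsilon_i\}_{i=1}^{2k-1}$ of $T_{f(p)}\Sp^{2k-1}\subset\mathbb{R}^{2k}$ with $df_p(e_i)=\lambda_i\epsilon_i$, $\lambda_i\geqslant0$. Evaluating the $(1,\Lambda^2)$-contracting hypothesis on the $2$-forms $\epsilon^i\wedge\epsilon^j$ yields $\abs{f^\ast(\epsilon^i\wedge\epsilon^j)}=\lambda_i\lambda_j\leqslant1$ for all $i\neq j$. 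Combining \eqref{curvsp}, the formula \eqref{curvfs} for $F_s$, and \eqref{clif2fm}, one gets at $p$
\[
c(R^E_s)=-2s(1-s)\sum_{i<j}\lambda_i\lambda_j\, c(e_i)c(e_j)\otimes\bar{c}(\epsilon_i)\bar{c}(\epsilon_j).
\]
Each endomorphism $c(e_i)c(e_j)\otimes\bar{c}(\epsilon_i)\bar{c}(\epsilon_j)$ is a product of Clifford multiplications by unit vectors, hence unitary of operator norm $1$, so the triangle inequality, together with $\lambda_i\lambda_j\leqslant1$, the count $\binom{2k-1}{2}$ of pairs $i<j$, and $s(1-s)\leqslant\frac14$, gives
\[
\abs{c(R^E_s)}\leqslant2s(1-s)\sum_{i<j}\lambda_i\lambda_j\leqslant2s(1-s)\binom{2k-1}{2}\leqslant\frac{(2k-1)(2k-2)}{4}.
\]
Consequently $\tfrac{k^{TM}(p)}{4}+c(R^E_s)\big|_p\geqslant\tfrac14\big(k^{TM}(p)-(2k-1)(2k-2)\big)\,\mathrm{Id}$, which is $\geqslant0$ everywhere by hypothesis and $>0$ at $p=x$, as required.

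I do not anticipate a serious obstacle: the Bochner-type vanishing argument is routine, and the only substantive computation is the curvature estimate. Its one delicate feature is that the uniform factor $s(1-s)\leqslant\frac14$ is exactly what permits the full curvature contribution $\binom{2k-1}{2}=\tfrac12(2k-1)(2k-2)$ to be absorbed into the scalar-curvature threshold $(2k-1)(2k-2)$, while the area-decreasing hypothesis enters only through $\lambda_i\lambda_j\leqslant1$. Note that $\deg f\neq0$ plays no role in this statement; it is used only to contradict the spectral-flow identity \eqref{sf}.
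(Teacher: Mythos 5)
Your proposal is correct, and both the overall Bochner--Lichnerowicz strategy and the curvature bound $\inpro{c(R^E_s)\psi,\psi}\geqslant -s(1-s)(2k-1)(2k-2)\abs{\psi}^2$ agree with the paper; but you reach it by a genuinely different route at two points. First, for the pointwise estimate you diagonalize $df_p$ by a singular value decomposition, so that $c(R^E_s)$ becomes an explicit sum of $\lambda_i\lambda_j$ times unitaries with $\lambda_i\lambda_j=\abs{f^\ast(\epsilon^i\wedge\epsilon^j)}\leqslant 1$, and you conclude by an operator-norm/triangle-inequality bound; the paper instead works with an arbitrary orthonormal basis and invokes its Proposition \ref{norm2form} (the identity $\abs{c(\alpha)\phi}=\abs{\alpha}\abs{\phi}$ for $\alpha\wedge\alpha=0$). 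Your version is more elementary here, though the paper's Proposition \ref{norm2form} is not wasted: its equality case is exactly what drives the later rigidity argument (Proposition \ref{rgsphere}). Second, and more substantively, you treat all $s\in[0,1]$ uniformly by keeping the term $\norm{\nabla^{S\otimes E}_s\psi}^2_{L^2}$: vanishing of it makes $\psi$ parallel, hence of constant norm, and pointwise vanishing of the nonnegative zeroth-order integrand at the point $x$ of strict scalar-curvature excess then kills $\psi$ everywhere. The paper instead splits off $s\neq\frac12$ (where $s(1-s)<\frac14$ already gives invertibility from $k^{TM}\geqslant(2k-1)(2k-2)$ alone, with no strictness -- a fact it reuses in the rigidity section) and handles $s=\frac12$ by showing $\phi$ vanishes on a neighborhood of $x$ and appealing to the unique continuation property of Dirac operators. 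Your parallel-section argument avoids unique continuation entirely and is cleaner for the theorem as stated; the paper's case analysis buys the sharper intermediate statement needed later. One small point to keep explicit if you write this up: metric compatibility of $\nabla^{S\otimes E}_s$ (Lemma \ref{unitarycnct} pulled back, tensored with the spinor connection) is what justifies both the integration by parts and the constancy of $\abs{\psi}$.
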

\begin{proof}
Let $\inppro{\cdot,\cdot}$ be the inner product on the space $C^\infty(M,S\otimes E)$ defined by
\[
\inppro{\phi,\psi}=\int_M\inpro{\phi(x),\psi(x)}\d x, \qquad \forall \phi,\psi \in C^\infty(M,S\otimes E).
\]
Then, from \eqref{lichnerowicz}, it follows that
\begin{equation}\label{lch}
\inppro{\de2 \phi, \phi}=\inppro{\nabla^{S\otimes E,\ast}_s \nabla^{S\otimes E}_s \phi,\phi}+\frac{1}{4}\inppro{k^{TM}\phi,\phi}+\inppro{c(R^{E}_s)\phi,\phi}.
\end{equation}

Given any $x\in M$, we take an orthonormal basis $\{\epsilon_i\}_{i=1}^{2k-1}$ of $T_{f(x)}\Sp^{2k-1}$ and denote by $\{\epsilon^i\}_{i=1}^{2k-1}$ its dual basis. It then follows from \eqref{curvfs} and \eqref{curvsp} that, in terms of this basis,
\begin{equation}\label{rssph}
R^E_s(x)=f^\ast [F_s(f(x))]= -2s(1-s)\sum_{i<j} f^\ast(\epsilon^i \wedge \epsilon^j) \otimes f^\ast [\bar{c}(\epsilon_i) \bar{c}(\epsilon_j)],
\end{equation}
where $\bar{c}(\epsilon_i)\bar{c}(\epsilon_j)$ is regarded as an element in $\End(E_{0,f(x)})$ and thus $f^\ast [\bar{c}(\epsilon_i)\bar{c}(\epsilon_j)]$ acts as an endormorphism of $E_x$. In addition, since $\bar{c}(\epsilon_i)\bar{c}(\epsilon_j)$ is an isometry of $E_{0,f(x)} \cong S_{2k,+}$, $f^\ast [\bar{c}(\epsilon_i)\bar{c}(\epsilon_j)]$ is an isometry of $E_x$.

Furthermore, let $\{e_i\}_{i=1}^{2k-1}$ be an orthonormal basis of $T_xM$, and then, combining with \eqref{clif2fm}, the formula \eqref{rssph} implies that
\begin{equation}\label{cfsph}
\begin{split}
c(R^E_s)\phi(x)=&-2s(1-s)\sum_{ k<l}\sum_{i<j}f^\ast(\epsilon^i\wedge \epsilon^j)(e_k,e_l)\cdot c(e_k)c(e_l)\otimes f^\ast[\bar{c}(\epsilon_i)\bar{c}(\epsilon_j)]\phi(x)\\
=&-2s(1-s)\sum_{i<j}c(f^\ast(\epsilon^i \wedge \epsilon^j))\otimes f^\ast [\bar{c}(\epsilon_i)\bar{c}(\epsilon_j)]\phi(x),
\end{split}
\end{equation}
where the Clifford action of 2-forms is defined as in \eqref{clff2form}. It follows from the result to be shown later in Proposition \ref{norm2form} that, for each fixed pair $(i,j)$,
\begin{equation}\label{esttoshow}
\abs{c(f^\ast(\epsilon^i \wedge \epsilon^j))\otimes f^\ast[\bar{c}(\epsilon_i)\bar{c}(\epsilon_j)]\phi(x)}\leqslant \abs{f^\ast[\bar{c}(\epsilon_i)\bar{c}(\epsilon_j)] \phi(x)}= \abs{\phi(x)},
\end{equation}
and therefore, for all $s\in[0,1]$,
\begin{equation}\label{estfs}
\inppro{c(R^{E}_s)\phi,\phi}\geqslant -(2k-1)(2k-2)s(1-s)\norm{\phi}^2
\geqslant -\frac{1}{4}(2k-1)(2k-2)\norm{\phi}^2.
\end{equation}
In particular, the right-most ``$\geqslant$" in \eqref{estfs} is strict unless $s=\frac{1}{2}$, and it follows immediately that, for all $s\neq \frac{1}{2}$ and any $\phi\in C^\infty(M,S\otimes E)$ such that $\norm{\phi}\neq 0$,
\begin{equation}\label{invertbileneq12}
\norm{D_s^E \phi}^2=\inppro{\de2 \phi, \phi}\geqslant \frac{1}{4}(2k-1)(2k-2)\norm{\phi}^2+\inppro{c(R^{E}_s)\phi,\phi}>0.
\end{equation}
Thus the operator $D_s^E$ is invertible for all $s\in [0,1]\setminus \{\frac{1}{2}\}$.

It now remains to deal with the case $s=\frac{1}{2}$. Assume now that $\phi \in \ker(D_{1/2}^E)$, then
\[
\inppro{(D_{1/2}^E)^2 \phi, \phi}=\norm{D_{1/2}^E \phi}^2=0.
\]
Therefore, by \eqref{lch}, one has that
\begin{equation}\label{curvest}
\frac{1}{4}\inppro{k^{TM}\phi,\phi}+\inppro{c(R^{E}_{1/2})\phi,\phi}=0.
\end{equation}
On the other hand, the left-hand side \eqref{curvest} is
\[
\inppro{(c(R^{E}_{1/2})+\frac{1}{4}k^{TM})\phi,\phi}=\int_M \inpro{(c(R^{E}_{1/2})+\frac{1}{4} k^{TM})\phi(x),\phi(x)} \d x.
\]
In addition, from the estimate \eqref{estfs}, it follows that for all $x\in M$,
\begin{equation}\label{curvest2}
\inpro{(c(R^{E}_{1/2})+\frac{1}{4} k^{TM})\phi(x),\phi(x)}\geqslant 0.
\end{equation}
The inequality \eqref{curvest2}, together with \eqref{curvest}, implies that
\[
\inpro{(c(R^{E}_{1/2})+\frac{1}{4} k^{TM})\phi(x),\phi(x)}\equiv 0.
\]
By the assumption that $k^{TM}(x)>(2k-1)(2k-2)$ at some $x\in M$, there exists an $\alpha>0$ and a neighborhood $U\subset M$ of $x$ such that $k^{TM}(y)\geqslant (2k-1)(2k-2)+4\alpha$ for all $y\in U$. Then, for all $y\in U$, 
\[
\alpha\abs{\phi(y)}\leqslant \inpro{(c(R^{E}_{1/2})+\frac{1}{4} k^{TM})\phi(y),\phi(y)}=0,
\]
and thus $\phi\vert_U\equiv 0$. It now follows from the unique continuation property of Dirac operators \cite[Chapter 8]{booss2012elliptic} that $\phi \equiv 0$. Therefore $D_{1/2}^E$ is invertible.
\end{proof}

Now we come back to show the following proposition, which plays a similar role of \cite[Lemma 4.5]{llarull1998sharp} in the proof. In particular, it shows straightforwardly how the $(1,\Lambda^2)$-contracting assumption implies the estimate \eqref{esttoshow} and will be helpful to adapt the method here to convex hypersurfaces in the next section. 

\begin{prop}\label{norm2form}
Let $\tilde{E}$ be any Hermitian vector bundle on $M$. We extend the action of the Clifford bundle $\mathbb{C}l(TM)$ on the spinor bundle $S$ to an action on $S\otimes \tilde{E}$ by acting as identity on $\tilde{E}$, and still denote it by $c(\cdot)$. Given any $2$-form $\alpha\in \Omega^2(M, \mathbb{R})$ such that $\alpha \wedge \alpha=0$, one has for all $\phi \in C^\infty(M,S\otimes \tilde{E})$ that
\begin{equation}\label{estclff2form}
\abs{c(\alpha) \phi(x)}=\abs{\alpha(x)}\abs{\phi(x)} \qquad \textit{for all } x \in M.
\end{equation}
\end{prop}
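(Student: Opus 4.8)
The plan is to reduce to a pointwise, purely linear-algebraic statement about the Clifford action on $S_x \otimes \tilde E_x$ and exploit the hypothesis $\alpha \wedge \alpha = 0$. Fix $x \in M$. The condition $\alpha(x) \wedge \alpha(x) = 0$ on a $2$-form means, by the standard normal form for $2$-forms (or equivalently since a decomposable-looking wedge-square vanishes), that there is a $g^{TM}$-orthonormal basis $\{e_1,\dots,e_{2k-1}\}$ of $T_xM$ in which $\alpha(x) = \lambda\, e^1 \wedge e^2$ for a single $\lambda \geqslant 0$; here $\lambda = |\alpha(x)|$ by \eqref{metric2form}. Indeed, a skew form always block-diagonalizes as $\sum_r \lambda_r\, e^{2r-1}\wedge e^{2r}$, and its square is $\sum_{r\neq r'} \lambda_r \lambda_{r'}\, e^{2r-1}\wedge e^{2r}\wedge e^{2r'-1}\wedge e^{2r'}$, which vanishes iff at most one $\lambda_r$ is nonzero. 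So the first step is to record this normal form and note $|\alpha(x)|=\lambda$.

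Next, using \eqref{clff2form} in this adapted frame, $c(\alpha(x)) = \lambda\, c(e_1)c(e_2)$ acting on $S_x$ and as the identity on $\tilde E_x$. Then for $\phi \in S_x \otimes \tilde E_x$,
\[
|c(\alpha(x))\phi|^2 = \lambda^2\, \langle c(e_1)c(e_2)\phi,\, c(e_1)c(e_2)\phi\rangle.
\]
Now I invoke the defining property of the Clifford action on spinors: each $c(e_i)$ is skew-adjoint with $c(e_i)^2 = -\mathrm{Id}$ (because unit vectors act isometrically on $S_{2k}$, equivalently $c(v)^*c(v) = -c(v)^2 = |v|^2\mathrm{Id}$), and this extends to $S\otimes\tilde E$ by acting as the identity on $\tilde E$. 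Hence $(c(e_1)c(e_2))^*\, c(e_1)c(e_2) = c(e_2)^* c(e_1)^* c(e_1) c(e_2) = (-c(e_2))(-c(e_1))c(e_1)c(e_2) = c(e_2)c(e_1)c(e_1)c(e_2)$; using $c(e_1)^2 = -\mathrm{Id}$ and then $c(e_2)^2 = -\mathrm{Id}$, this equals $\mathrm{Id}$. Therefore $c(e_1)c(e_2)$ is a unitary operator on $S_x\otimes\tilde E_x$, so it preserves norms, and $|c(\alpha(x))\phi| = \lambda\,|\phi| = |\alpha(x)|\,|\phi|$, which is \eqref{estclff2form}.

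The only subtle point — and the one I would be most careful about — is the normal-form step: I need that $\alpha \wedge \alpha = 0$ forces \emph{at most one} nonzero coefficient $\lambda_r$ in the block decomposition, rather than merely some algebraic cancellation among several blocks. This is clean because the four-forms $e^{2r-1}\wedge e^{2r}\wedge e^{2r'-1}\wedge e^{2r'}$ for distinct index pairs are linearly independent, so the vanishing of $\alpha\wedge\alpha$ is equivalent to $\lambda_r\lambda_{r'} = 0$ for all $r \neq r'$, i.e. to rank $\alpha(x) \leqslant 2$. Everything else is a routine unwinding of the Clifford relations, and the extension from $S$ to $S \otimes \tilde E$ is immediate since the $\tilde E$-factor is untouched by $c(\cdot)$ and the tensor-product Hermitian metric splits the norm. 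I would also remark that this is precisely the mechanism by which the $(1,\Lambda^2)$-contracting hypothesis enters in Theorem \ref{invsph}: the forms $f^\ast(\epsilon^i\wedge\epsilon^j)$ are pullbacks of decomposable $2$-forms on $\Sp^{2k-1}$, hence satisfy $f^\ast(\epsilon^i\wedge\epsilon^j)\wedge f^\ast(\epsilon^i\wedge\epsilon^j) = f^\ast\big((\epsilon^i\wedge\epsilon^j)\wedge(\epsilon^i\wedge\epsilon^j)\big) = 0$, and the contracting bound gives $|f^\ast(\epsilon^i\wedge\epsilon^j)| \leqslant |\epsilon^i\wedge\epsilon^j| = 1$.
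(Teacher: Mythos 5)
Your proof is correct, but it takes a genuinely different route from the paper's. The paper never chooses an adapted basis: working in an arbitrary orthonormal frame, it uses skew-adjointness of $c(\alpha)$ to write $\abs{c(\alpha)\phi(x)}^2=-\inpro{c(\alpha)c(\alpha)\phi(x),\phi(x)}$ and then expands $c(\alpha)c(\alpha)$ combinatorially, sorting terms into three types: those with four distinct indices assemble into $\alpha\wedge\alpha$ and vanish by hypothesis, those with a repeated index pair sum to $-\abs{\alpha(x)}^2$, and the mixed terms cancel in pairs, yielding the operator identity $c(\alpha)^2=-\abs{\alpha(x)}^2\,\mathrm{Id}$. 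You instead invoke the orthonormal normal form for skew $2$-forms, observe that $\alpha\wedge\alpha=0$ kills all but one block (your linear-independence argument for the $4$-forms, giving $\lambda_r\lambda_{r'}=0$ for $r\neq r'$, is exactly the right justification), so $\alpha(x)=\lambda\,e^1\wedge e^2$ with $\lambda=\abs{\alpha(x)}$, and the claim reduces to the unitarity of $c(e_1)c(e_2)$ on $S_x\otimes\tilde{E}_x$, which you verify from skew-adjointness and $c(e_i)^2=-\mathrm{Id}$. Your argument is shorter and makes transparent that the real mechanism is decomposability (rank at most $2$) of $\alpha(x)$; its only extra input is the block-diagonalization of a skew form relative to $g^{TM}$. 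The paper's computation buys a frame-independent identity obtained without the normal form (it is essentially the computation behind Listing's Lemma 2), but both proofs rest on the same standing facts --- skew-adjointness of Clifford multiplication by tangent vectors and the splitting of the tensor-product Hermitian metric, the $\tilde{E}$-factor being untouched --- so they are interchangeable in the applications to \eqref{esttoshow} and \eqref{estshown}.
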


This proposition can be viewed as a corollary of \cite[Lemma 2]{listing2010scalar}. However, for the completeness, we present a detailed proof below. In this paper, the result is applied to the $2$-forms $f^\ast (\epsilon^i \wedge \epsilon^j)$, for which we have $f^\ast (\epsilon^i \wedge \epsilon^j)\wedge f^\ast (\epsilon^i \wedge \epsilon^j)=f^\ast (\epsilon^i \wedge \epsilon^j \wedge \epsilon^i \wedge \epsilon^j)=0$. 

\begin{proof}[Proof of Proposition \ref{norm2form}]
Given any $x\in M$, take an arbitrary orthonormal basis $\{e_i\}_{i=1}^{\dim M}$ of $T_x M$, and then we have
\[
\begin{split}
\inpro{\phi(x), c(\alpha) \phi(x)}=&\sum_{k<l} \inpro{\phi(x), \alpha(e_k, e_l)c(e_k)c(e_l) \phi(x)}=\sum_{k<l} \alpha(e_k, e_l) \inpro{c(e_l)c(e_k) \phi(x), \phi(x)}\\
=&\sum_{k<l} \alpha(e_k, e_l) \inpro{-c(e_k)c(e_l) \phi(x), \phi(x)}=-\inpro{c(\alpha) \phi(x), \phi(x)},
\end{split}
\]
where each $\alpha(e_k, e_l)\in \mathbb{R}$, and therefore
\begin{equation*}
\abs{c(\alpha) \phi(x)}^2=\inpro{c(\alpha) \phi(x), c(\alpha) \phi(x)}=-\inpro{c(\alpha)c(\alpha) \phi(x), \phi(x)}.
\end{equation*}
On the other hand, note that
\[
c(\alpha) c(\alpha)=\frac{1}{4}\sum_{k,l,p,q} \alpha(e_k, e_l)\alpha(e_p, e_q)c(e_k)c(e_l)c(e_p)c(e_q).
\]
And we separate the summation into three different types.

\textbf{Type I:} $k,l,p,q$ mutually distinct. The sum of all terms of this type is
\[
\begin{split}
\mathrm{I}=&\frac{1}{4}\sum_{i_1<i_2<i_3<i_4}\sum_{\sigma \in \Sigma_4}\alpha(e_{i_{\sigma(1)}}, e_{i_{\sigma(2)}})\alpha(e_{i_{\sigma(3)}}, e_{i_{\sigma(4)}})c(e_{i_{\sigma(1)}})c(e_{i_{\sigma(2)}})c(e_{i_{\sigma(3)}})
c(e_{i_{\sigma(4)}})\\
=&\frac{1}{4}\sum_{i_1<i_2<i_3<i_4} \sum_{\sigma \in \Sigma_4}\mathrm{sgn}(\sigma) \alpha(e_{i_{\sigma(1)}}, e_{i_{\sigma(2)}})\alpha(e_{i_{\sigma(3)}}, e_{i_{\sigma(4)}})c(e_{i_{1}})c(e_{i_{2}})c(e_{i_{3}})c(e_{i_{4}})\\
=&\sum_{i_1<i_2<i_3<i_4} [\alpha\wedge \alpha(e_{i_1},e_{i_2},e_{i_3},e_{i_4})]c(e_{i_{1}})c(e_{i_{2}})c(e_{i_{3}})c(e_{i_{4}})=0,
\end{split}
\]
because $\alpha \wedge \alpha=0$. Throughout this proof, $\Sigma_n$ denotes the permutation group of the set $\{1,2, \dots, n\}$. 

\textbf{Type II:} $\{p,q\}=\{k,l\}$. The sum of all terms of this type is
\[
\begin{split}
\mathrm{II}=&\frac{1}{4}\sum_{k,l}\alpha(e_k, e_l)c(e_k)c(e_l)[\alpha(e_k, e_l)c(e_k)c(e_l)+\alpha(e_l, e_k)c(e_l)c(e_k)]\\
=-&\frac{1}{2}\sum_{k,l}(\alpha(e_k, e_l))^2=-\abs{\alpha(x)}^2,
\end{split}
\]
where the pointwise norm $\abs{\cdot}$ of $2$-forms is induced by the inner-product \eqref{metric2form}.

\textbf{Type III:} For the rest of the terms, there is precisely one index among $p,q$ that coincides with $k$ or $l$, and the sum of all such terms can now be rewritten as
\[
\begin{split}
\mathrm{III}=\frac{1}{4}\sum_{i_1<i_2<i_3}\sum_{\sigma \in \Sigma_3} &\alpha(e_{i_{\sigma(1)}}, e_{i_{\sigma(2)}})\alpha(e_{i_{\sigma(3)}}, e_{i_{\sigma(1)}})c(e_{i_{\sigma(1)}})c(e_{i_{\sigma(2)}})c(e_{i_{\sigma(3)}})
c(e_{i_{\sigma(1)}})\\
+&\alpha(e_{i_{\sigma(1)}}, e_{i_{\sigma(2)}})\alpha(e_{i_{\sigma(1)}},e_{i_{\sigma(3)}} )c(e_{i_{\sigma(1)}})c(e_{i_{\sigma(2)}})c(e_{i_{\sigma(1)}}) c(e_{i_{\sigma(3)}})
\\
+&\alpha(e_{i_{\sigma(1)}}, e_{i_{\sigma(2)}})\alpha(e_{i_{\sigma(2)}}, e_{i_{\sigma(3)}})c(e_{i_{\sigma(1)}})c(e_{i_{\sigma(2)}})c(e_{i_{\sigma(2)}})
c(e_{i_{\sigma(3)}})\\
 +&\alpha(e_{i_{\sigma(1)}}, e_{i_{\sigma(2)}})\alpha(e_{i_{\sigma(2)}}, e_{i_{\sigma(3)}})c(e_{i_{\sigma(1)}})c(e_{i_{\sigma(2)}})c(e_{i_{\sigma(2)}}) c(e_{i_{\sigma(3)}}).
\end{split}
\]

Note that
\[
\begin{split}
\sum_{\sigma \in \Sigma_3} &\alpha(e_{i_{\sigma(1)}}, e_{i_{\sigma(2)}})\alpha(e_{i_{\sigma(3)}}, e_{i_{\sigma(1)}})c(e_{i_{\sigma(1)}})c(e_{i_{\sigma(2)}})c(e_{i_{\sigma(3)}})
c(e_{i_{\sigma(1)}})\\
=\frac{1}{2}\sum_{\sigma \in \Sigma_3} &\alpha(e_{i_{\sigma(1)}}, e_{i_{\sigma(2)}})\alpha(e_{i_{\sigma(3)}}, e_{i_{\sigma(1)}})c(e_{i_{\sigma(1)}})c(e_{i_{\sigma(2)}})c(e_{i_{\sigma(3)}})
c(e_{i_{\sigma(1)}})\\
+&\alpha(e_{i_{\sigma(3)}}, e_{i_{\sigma(1)}})\alpha(e_{i_{\sigma(1)}}, e_{i_{\sigma(2)}})c(e_{i_{\sigma(3)}})
c(e_{i_{\sigma(1)}})c(e_{i_{\sigma(1)}})c(e_{i_{\sigma(2)}})=0,
\end{split}
\]
which holds the same way for the sum of all other terms. Therefore, we have that $\mathrm{III}=0$.

It finally follows that
\begin{equation}
\begin{split}
\abs{c(\alpha) \phi(x)}^2=-\inpro{c(\alpha)c(\alpha) \phi(x), \phi(x)}=\abs{\alpha(x)}^2 \abs{\phi(x)}^2.
\end{split}
\end{equation}

\end{proof}

\subsection{Rigidity}
The last thing to show is the rigidity part of Theorem \ref{Llarull}. 
\begin{prop}\label{rgsphere}
If the scalar curvature $k^{TM}\equiv (2k-1)(2k-2)$, then $f$ has to be an isometry. 
\end{prop}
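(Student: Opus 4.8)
The plan is to extract geometric rigidity from the equality case of all the estimates used in the proof of Theorem \ref{invsph}. Assume $k^{TM}\equiv (2k-1)(2k-2)$. By the spectral flow computation $\sfl{D^E_s}=-\deg(f)\neq 0$, there must be some $s_0\in[0,1]$ and a nonzero $\phi\in\ker(D^E_{s_0})$ (otherwise the family would be invertible throughout and the spectral flow would vanish). Tracking through \eqref{lch}, \eqref{estfs}, \eqref{invertbileneq12}, the only place an eigenvalue can cross zero is at $s_0=\tfrac12$, since for $s\neq\tfrac12$ the strict inequality in \eqref{estfs} already forces invertibility \emph{regardless of whether $k^{TM}$ attains its lower bound with equality} --- this is the point where the hypothesis differs from Theorem \ref{invsph}. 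So we get a nonzero $\phi\in\ker(D^E_{1/2})$, and as in the proof of Theorem \ref{invsph}, \eqref{lch} gives
\[
0=\inppro{\nabla^{S\otimes E,\ast}_{1/2}\nabla^{S\otimes E}_{1/2}\phi,\phi}+\int_M\inpro{\bigl(c(R^E_{1/2})+\tfrac14 k^{TM}\bigr)\phi(x),\phi(x)}\,\d x,
\]
and both summands are nonnegative (the second by \eqref{estfs} with $k^{TM}=(2k-1)(2k-2)$), hence both vanish identically. In particular $\nabla^{S\otimes E}_{1/2}\phi\equiv 0$, so $\phi$ is nowhere zero (a parallel section), and pointwise
\[
\inpro{\bigl(c(R^E_{1/2})+\tfrac14(2k-1)(2k-2)\bigr)\phi(x),\phi(x)}\equiv 0.
\]

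Next I would unpack this pointwise equality into an equality in the chain \eqref{cfsph}--\eqref{esttoshow}. Writing $c(R^E_{1/2})\phi(x)=-\tfrac12\sum_{i<j}c(f^\ast(\epsilon^i\wedge\epsilon^j))\otimes f^\ast[\bar c(\epsilon_i)\bar c(\epsilon_j)]\phi(x)$ and using that each $f^\ast[\bar c(\epsilon_i)\bar c(\epsilon_j)]$ is an isometry of $E_x$ together with Proposition \ref{norm2form}, one has $\bigl|c(f^\ast(\epsilon^i\wedge\epsilon^j))\otimes f^\ast[\bar c(\epsilon_i)\bar c(\epsilon_j)]\phi(x)\bigr|=|f^\ast(\epsilon^i\wedge\epsilon^j)(x)|\,|\phi(x)|$. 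Summing, $|c(R^E_{1/2})\phi(x)|\leqslant\tfrac12\bigl(\sum_{i<j}|f^\ast(\epsilon^i\wedge\epsilon^j)(x)|\bigr)|\phi(x)|$, and the $(1,\Lambda^2)$-contracting hypothesis gives $|f^\ast(\epsilon^i\wedge\epsilon^j)(x)|\leqslant 1$, so the full expression is $\geqslant-\tfrac14(2k-1)(2k-2)|\phi(x)|^2$. Equality throughout forces: (i) $|f^\ast(\epsilon^i\wedge\epsilon^j)(x)|=1$ for every $i<j$ and every orthonormal basis $\{\epsilon_i\}$ of $T_{f(x)}\Sp^{2k-1}$; and (ii) the vectors $c(f^\ast(\epsilon^i\wedge\epsilon^j))\otimes f^\ast[\bar c(\epsilon_i)\bar c(\epsilon_j)]\phi(x)$ all point in the \emph{same} direction (all equal to $-\phi(x)$, after normalization), i.e. there is no cancellation in the sum.

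From (i) alone I would conclude $df_x$ is an isometry. Indeed, choosing $\{\epsilon_i\}$ adapted to the singular value decomposition of $df_x$ (so $f^\ast\epsilon^i=\lambda_i\,\tilde\epsilon^i$ for an orthonormal coframe $\{\tilde\epsilon^i\}$ of $T^\ast_xM$ and singular values $\lambda_1\geqslant\cdots\geqslant\lambda_{2k-1}\geqslant 0$), the condition $|f^\ast(\epsilon^i\wedge\epsilon^j)|=\lambda_i\lambda_j=1$ for \emph{all} pairs $i<j$ forces all $\lambda_i=1$ when $2k-1\geqslant 3$ (from $\lambda_1\lambda_2=\lambda_1\lambda_3=\lambda_2\lambda_3=1$ one gets $\lambda_1=\lambda_2=\lambda_3$ and hence $=1$; the remaining pairs then fix the rest), which is exactly the hypothesis $k\geqslant 2$. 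Hence $df_x$ is an orthogonal isomorphism at every $x$, so $f$ is a local isometry; since $\Sp^{2k-1}$ is simply connected and $M$ is closed, $f$ is a Riemannian covering, and $\deg(f)\neq 0$ together with simple-connectedness of the sphere makes it a global isometry.

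The main obstacle I expect is step (i)$\Rightarrow$"$df_x$ isometry" done \emph{cleanly and basis-independently}: the equality $|f^\ast(\epsilon^i\wedge\epsilon^j)(x)|=1$ is asserted only for orthonormal bases of the tangent space of the sphere, and one must be careful that $df_x$ could a priori be non-injective, in which case some $f^\ast(\epsilon^i\wedge\epsilon^j)$ could vanish --- so one really needs to argue that the pointwise identity forces every $2\times2$ minor of $df_x$ (in suitable bases) to have absolute value $1$, ruling this out. A secondary subtlety is justifying that it suffices to use the \emph{equality} case at points where $\phi(x)\neq 0$, which is handled by the parallelism $\nabla^{S\otimes E}_{1/2}\phi\equiv 0$ established above; one should also note that we never need the unique-continuation argument here since $\phi$ is literally nowhere vanishing.
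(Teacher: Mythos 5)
Your proposal is correct, and its analytic skeleton (nonzero spectral flow forces a nonzero $\phi\in\ker(D^E_{1/2})$, invertibility for $s\neq\tfrac12$ via \eqref{invertbileneq12}, then equality analysis in the Lichnerowicz identity using Proposition \ref{norm2form}) coincides with the paper's. The differences lie in how you close the argument. First, you use the vanishing of the connection-Laplacian term to conclude $\nabla^{S\otimes E}_{1/2}\phi\equiv 0$, so that $\abs{\phi}$ is a nonzero constant and the pointwise equality \eqref{equality} carries information at every point; the paper does not make this step explicit (its equality is vacuous at points where $\phi$ vanishes), so your refinement actually tightens the argument and, as you note, makes unique continuation unnecessary in the rigidity step. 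Second, to pass from $\abs{f^\ast(\epsilon^i\wedge\epsilon^j)}=1$ to $df_x$ being an isometry you use a basis adapted to the singular value decomposition of $df_x$, exploiting that for the round sphere the equality is available for \emph{every} orthonormal basis of $T_{f(x)}\Sp^{2k-1}$; this is shorter and more elementary than the paper's key algebraic lemma, Proposition \ref{normeq}, and it automatically rules out degenerate $df_x$ since $\lambda_i\lambda_j=1$ forces all singular values positive. The trade-off is generality: Proposition \ref{normeq} needs the norm-one condition for only a single orthonormal basis together with the $(1,\Lambda^2)$-contracting hypothesis, which is exactly what survives in the convex-hypersurface setting of Theorem \ref{convexsfc}, where the curvature formula forces the basis to diagonalize the shape operator and cannot be chosen adapted to $df_x$; your SVD shortcut would not transfer there without additional work. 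For Proposition \ref{rgsphere} itself, both routes are complete and correct.
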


Llarull's method to prove the rigidity part for the even-dimensional case is actually sufficient for proving Proposition \ref{rgsphere}. However, we base our proof on the following algebraic result, so that our approach can be adapted naturally to the rigidity part of both Theorem \ref{Llarull} and Theorem \ref{convexsfc}, and might also be useful in other situations.

\begin{prop}\label{normeq}
Let $f: (M,g^{TM}) \to (N,g^{TN})$ be a $(1,\Lambda^2)$-contracting map, where $\dim M= \dim N=n\geqslant 3$. If at a point $x\in M$, there exists an orthonormal basis $\{\epsilon^i\}_{i=1}^n$ of $T^\ast_{f(x)} N$, such that $\abs{f^\ast (\epsilon^i \wedge \epsilon^j)}=1$ for all $1\leqslant i<j\leqslant n$, then $f^\ast$ maps $T^\ast_{f(x)} N$ isometrically to $T^\ast_x M$.
\end{prop}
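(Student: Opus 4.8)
The plan is to work entirely at the single point $x$, reducing the claim to a linear-algebra statement about the matrix of $f^\ast$. Fix an orthonormal basis $\{e_i\}_{i=1}^n$ of $T_xM$ and the given orthonormal basis $\{\epsilon^i\}$ of $T_{f(x)}^\ast N$; write $a_{ij} = (f^\ast \epsilon^j)(e_i)$, so that $f^\ast \epsilon^j = \sum_i a_{ij} e^i$ in terms of the dual basis $\{e^i\}$ of $T_x^\ast M$. Then $f^\ast(\epsilon^i\wedge\epsilon^j) = \sum_{k<l}(a_{ki}a_{lj} - a_{li}a_{kj})\, e^k\wedge e^l$, so by \eqref{metric2form} its squared norm is $\sum_{k<l}(a_{ki}a_{lj}-a_{li}a_{kj})^2$, which is exactly the sum of squares of the $2\times 2$ minors of the matrix $A=(a_{ij})$ using columns $i,j$. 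The hypothesis $\abs{f^\ast(\epsilon^i\wedge\epsilon^j)}=1$ for every pair thus says every such ``two-column Gram minor'' equals $1$; combined with the $(1,\Lambda^2)$-contracting hypothesis, which gives $\abs{f^\ast\omega}\leqslant\abs{\omega}$ for all $2$-forms, we get in particular bounds on all $2$-forms built from the $\epsilon^i$. The goal is to deduce $A^TA = I$, i.e.\ $f^\ast$ is an isometry.

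The key algebraic input is the Cauchy–Binet expansion: if $\sigma_1\geqslant\cdots\geqslant\sigma_n\geqslant 0$ are the singular values of $A$, then $\abs{f^\ast(\epsilon^i\wedge\epsilon^j)}^2$, summed appropriately, relates to elementary symmetric functions of $\sigma_k^2$. More precisely, for any orthonormal pair of covectors $\xi,\eta$ in $T_{f(x)}^\ast N$ spanning a plane $P$, $\abs{f^\ast(\xi\wedge\eta)}^2 = \sum_{k<l}\sigma_k^2\sigma_l^2 \cdot (\text{something depending on }P)$ — cleanly, the maximum of $\abs{f^\ast(\xi\wedge\eta)}^2$ over all such planes is $\sigma_1^2\sigma_2^2$ and the minimum is $\sigma_{n-1}^2\sigma_n^2$. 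The $(1,\Lambda^2)$ hypothesis forces $\sigma_1^2\sigma_2^2 \leqslant 1$, hence $\sigma_k\sigma_l\leqslant 1$ for all $k\ne l$. Now I would use the hypothesis that \emph{every} coordinate pair $(i,j)$ gives norm exactly $1$: averaging $\abs{f^\ast(\epsilon^i\wedge\epsilon^j)}^2=1$ over all $\binom n2$ pairs yields, by Cauchy–Binet, a clean identity of the form $\frac{1}{\binom n2}\sum_{i<j}\abs{f^\ast(\epsilon^i\wedge\epsilon^j)}^2 = \frac{1}{\binom n2}\sum_{k<l}\sigma_k^2\sigma_l^2 = 1$, so $e_2(\sigma_1^2,\dots,\sigma_n^2) = \binom n2$. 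Since each $\sigma_k^2\sigma_l^2\leqslant 1$ and there are $\binom n2$ terms summing to $\binom n2$, every product $\sigma_k^2\sigma_l^2 = 1$; with $n\geqslant 3$ this forces every $\sigma_k^2 = 1$ (e.g.\ $\sigma_1^2\sigma_2^2 = \sigma_1^2\sigma_3^2 = \sigma_2^2\sigma_3^2 = 1$ gives $\sigma_1=\sigma_2=\sigma_3$, hence all equal to $1$).

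Once all singular values are $1$, $A$ is orthogonal, so $f^\ast\colon T_{f(x)}^\ast N \to T_x^\ast M$ is an isometry, which is the claim. The main obstacle I anticipate is making the Cauchy–Binet / singular-value bookkeeping precise — specifically, justifying that $\abs{f^\ast(\epsilon^i\wedge\epsilon^j)}^2$ is the sum of squares of the relevant $2\times 2$ minors and that its average over all pairs is $e_2$ of the squared singular values. This is standard but needs care: one should diagonalize $A^TA$ or use the fact that $\sum_{i<j}\abs{f^\ast(\epsilon^i\wedge\epsilon^j)}^2 = \abs{\Lambda^2 f^\ast}^2_{HS} = e_2(\sigma_1^2,\dots,\sigma_n^2)$, where $\Lambda^2 f^\ast$ is the induced map on $2$-vectors and $\abs{\cdot}_{HS}$ its Hilbert–Schmidt norm. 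A secondary point is confirming that the per-pair upper bound $\sigma_k^2\sigma_l^2\leqslant 1$ really follows from $(1,\Lambda^2)$-contracting and not merely the average; this follows because $\sigma_1^2\sigma_2^2 = \max_{\xi\wedge\eta} \abs{f^\ast(\xi\wedge\eta)}^2 \leqslant \abs{\xi\wedge\eta}^2 = 1$ over unit decomposable $2$-vectors. I would also note explicitly where $n\geqslant 3$ is used: for $n=2$ there is a single pair and the conclusion fails (only $\sigma_1\sigma_2 = 1$ is forced, not $\sigma_1 = \sigma_2 = 1$), consistent with the hypothesis $n\geqslant 3$ in the statement.
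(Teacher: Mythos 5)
Your argument is correct, but it is a genuinely different route from the paper's. You diagonalize: writing $A$ for the matrix of $f^\ast$ in orthonormal bases with singular values $\sigma_1\geqslant\cdots\geqslant\sigma_n$, you use that $\Lambda^2 f^\ast$ has singular values $\{\sigma_k\sigma_l\}_{k<l}$, so the $(1,\Lambda^2)$-contracting hypothesis gives $\sigma_1\sigma_2\leqslant 1$ (hence every $\sigma_k\sigma_l\leqslant 1$), while the hypothesis $\abs{f^\ast(\epsilon^i\wedge\epsilon^j)}=1$ summed over the orthonormal basis $\{\epsilon^i\wedge\epsilon^j\}$ of $\Lambda^2 T^\ast_{f(x)}N$ gives $\sum_{k<l}\sigma_k^2\sigma_l^2=\binom n2$; pigeonhole forces $\sigma_k\sigma_l=1$ for all pairs, and $n\geqslant 3$ then forces all $\sigma_k=1$, i.e.\ $A^TA=I$. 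All the steps check out (the identification of $\abs{f^\ast(\epsilon^i\wedge\epsilon^j)}^2$ with the sum of squared $2\times2$ minors is \eqref{metric2form}, and the pointwise reformulation of \eqref{pcontracting} as an operator-norm bound on $\Lambda^2 f^\ast_x$ is immediate by extending covectors to forms). The paper instead argues by hand in three stages: it first shows the $f^\ast(\epsilon^i\wedge\epsilon^j)$ are mutually orthogonal (testing the contracting inequality on unit combinations $a\,\epsilon^i\wedge\epsilon^j+\sqrt{1-a^2}\,\epsilon^k\wedge\epsilon^l$), then that the $f^\ast\epsilon^i$ are mutually orthogonal via a decomposition argument involving a third index (this is where the paper uses $n\geqslant 3$), and finally that each $\abs{f^\ast\epsilon^i}=1$. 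Your approach is shorter, avoids the case analysis, in fact only needs the average of the $\binom n2$ norms to equal $1$ together with the contraction bound, and makes the role of $n\geqslant 3$ transparent; the paper's approach is more elementary (no singular value decomposition or exterior-power bookkeeping) and produces the intermediate orthogonality statements explicitly. When writing yours up, just state carefully the two linear-algebra facts you flagged: that the singular values of $\Lambda^2 A$ are the products $\sigma_k\sigma_l$ (so $\norm{\Lambda^2 A}_{op}=\sigma_1\sigma_2$ and $\norm{\Lambda^2 A}_{HS}^2=\sum_{k<l}\sigma_k^2\sigma_l^2$), which follow from $A=U\Sigma V^T$ and the fact that orthogonal maps induce isometries of $\Lambda^2$.
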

\begin{proof}
We first show that $\{f^\ast (\epsilon^i \wedge \epsilon^j)\}_{1\leqslant i<j\leqslant n}$ forms an orthonormal basis of $(\Lambda^2(T^\ast_x M), g^{TM})$, i.e. $f^\ast$ determines an isometry from $(\Lambda^2(T^\ast_{f(x)} X),g^{TN})$ to $(\Lambda^2(T^\ast_x M),g^{TM})$. With the assumption that each $f^\ast(\epsilon^i \wedge \epsilon^j)$ is of unit length, it actually suffices to show that $\{f^\ast (\epsilon^i \wedge \epsilon^j)\}_{1\leqslant i<j\leqslant n}$ is an orthogonal set. 

If it were not, then, without loss of generality, one could find a pair of distinct multi-indices $(i,j)$ and $(k,l)$, such that
\[
g^{TM}(f^\ast(\epsilon^i \wedge \epsilon^j), f^\ast (\epsilon^k \wedge \epsilon^l)) > 0.
\]
Let $a \in (0,1)$, and consider the $2$-form $\alpha=a\cdot \epsilon^i \wedge \epsilon^j+\sqrt{1-a^2}\cdot \epsilon^k \wedge \epsilon^l$. It follows by direct computation that $\abs{\alpha}=1$, and that
\begin{equation*}
\begin{split}
\abs{f^\ast \alpha}^2=&g^{TM}(f^\ast \alpha, f^\ast \alpha)\\
=&a^2 \abs{f^\ast (\epsilon^i \wedge \epsilon^j)}^2+(1-a^2)\abs{f^\ast(\epsilon^k \wedge \epsilon^l)}^2+2a \sqrt{1-a^2} g^{TM}(f^\ast(\epsilon^i \wedge \epsilon^j), f^\ast (\epsilon^k \wedge \epsilon^l))>1,
\end{split}
\end{equation*}
which contradicts the assumption that $f$ is $(1,\Lambda^2)$-contracting.

It now remains to show that $\{f^\ast (\epsilon_i)\}_{i=1}^n$ is an orthonormal basis of $T^\ast_x M$, for which we start by verifying it is indeed an orthogonal set. Assuming that there exists a pair $i\neq j$ such that $g^{TM}(f^\ast \epsilon^i, f^\ast \epsilon^j)\neq 0$, then $f^\ast(\epsilon^j)$ has a unique decomposition
\[
f^\ast(\epsilon^j)=b\cdot f^\ast (\epsilon^i)+\tilde{\epsilon}^j.
\]
for some $b\neq 0$ and $\tilde{\epsilon}^j \perp f^\ast (\epsilon^i)$ being nonzero. Take another $k\neq i,j$, and decompose $f^\ast(\epsilon^k)$ the same way as
\[
f^\ast(\epsilon^k)=c \cdot f^\ast (\epsilon^i)+\tilde{\epsilon}^k,
\]
for some $c\in \mathbb{R}$ (can be $0$) and $\tilde{\epsilon}^k \perp f^\ast (\epsilon^i)$ being nonzero. And hence
\[
f^\ast (\epsilon^i \wedge \epsilon^k)=f^\ast \epsilon^i \wedge \tilde{\epsilon}^k.
\]

Since it has been shown that $g^{TM}(f^\ast(\epsilon^i \wedge \epsilon^j),f^{\ast}(\epsilon^i \wedge \epsilon^k))=0$, while on the other hand, by \eqref{metric2form},
\[
g^{TM}(f^\ast(\epsilon^i \wedge \epsilon^j),f^{\ast}(\epsilon^i \wedge \epsilon^k))= g^{TM}(f^\ast \epsilon^i \wedge \tilde{\epsilon}^j,f^{\ast}\epsilon^i \wedge \tilde{\epsilon}^k)=\abs{f^\ast (\epsilon^i)}^2 \cdot g^{TM}(\tilde{\epsilon}^j,\tilde{\epsilon}^k),
\]
where $\abs{f^\ast (\epsilon^i)}^2>0$, one then has that $g(\tilde{\epsilon}^j, \tilde{\epsilon}^k)=0$.
It then follows from \eqref{metric2form} that
\[
\begin{split}
g^{TM}(f^\ast \epsilon^i \wedge \tilde{\epsilon}^k, \tilde{\epsilon}^j \wedge f^\ast \epsilon^k)=&g^{TM}(f^\ast \epsilon^i \wedge \tilde{\epsilon}^k,\tilde{\epsilon}^j \wedge  \tilde{ \epsilon}^k)+c \cdot g^{TM}(f^\ast \epsilon^i \wedge \tilde{\epsilon}^k,\tilde{\epsilon}^j \wedge  f^\ast \epsilon^i)\\
=&\abs{\tilde{\epsilon}^k}^2\cdot g^{TM}(f^\ast \epsilon^i, \tilde{\epsilon}^j)-c \abs{f^\ast \epsilon^i}^2 \cdot g^{TM}(\tilde{\epsilon}^k ,\tilde{\epsilon}^j)=0.
\end{split}
\]
Noticing that
\[
f^\ast(\epsilon^j\wedge \epsilon^k)=b \cdot f^\ast (\epsilon^i \wedge \epsilon^k)+\tilde{\epsilon}^j \wedge f^\ast \epsilon^k,
\]
it now follows from direct computation that
\[
\begin{split}
g^{TM}(f^\ast (\epsilon^i \wedge \epsilon^k), f^\ast (\epsilon^j\wedge \epsilon^k))=& b \cdot g^{TM}(f^\ast (\epsilon^i \wedge \epsilon^k), f^\ast (\epsilon^i\wedge \epsilon^k))+g^{TM}(f^\ast \epsilon^i \wedge \tilde{\epsilon}^k, \tilde \epsilon^j\wedge f^\ast \epsilon^k)\\
=&b\cdot g^{TM}(f^\ast (\epsilon^i \wedge \epsilon^k), f^\ast (\epsilon^i\wedge \epsilon^k))\neq 0,
\end{split}
\]
which would contradict the result that $g^{TM}(f^\ast (\epsilon^i \wedge \epsilon^k), f^\ast (\epsilon^j\wedge \epsilon^k))=0$. This means that $\{f^\ast (\epsilon^i)\}_{i=1}^n$ forms an orthogonal set.

Now we assume that there exists an $f^\ast(\epsilon^i)$ such that $\abs{f^\ast(\epsilon^i)}\neq1$, and, without loss of generality, $\abs{f^\ast(\epsilon^i)}<1$. Then we take two distinct numbers $j,k\neq i$. Since $\abs{f^\ast(\epsilon^i\wedge \epsilon^{j})}=\abs{f^\ast(\epsilon^i\wedge \epsilon^{k})}=1$, one has that $\abs{f^\ast( \epsilon^{j})},\abs{f^\ast(\epsilon^{k})}>1$. And then,
\[
\abs{f^\ast( \epsilon^{j}\wedge \epsilon^{k})}>1,
\]
which contradicts the assumption that $\abs{f^\ast( \epsilon^{j}\wedge \epsilon^{k})}=1$. Therefore $\abs{f^\ast(\epsilon^i)}=1$ for all $i=1,2,\dots, n$.
\end{proof}

\begin{proof}[Proof of Proposition \ref{rgsphere}]
Assuming that $k^{TM} \equiv (2k-1)(2k-2)$, it then follows again from \eqref{invertbileneq12} that, for any $s\in [0,1]\setminus\{\frac{1}{2}\}$, $D^E_s$ is invertible. On the other hand, according to \eqref{sf}, the spectral flow $\sfl{D^E_s}\neq 0$, so $D^E_{1/2}$ cannot be invertible, i.e. there exists $\phi \in \ker(D^E_{1/2})$ such that $\norm{\phi}\neq 0$. Then $\phi$ has to satisfy that
\begin{equation}
\inppro{(c(R^{E}_{1/2})+\frac{1}{4}k^{TM})\phi,\phi}=\int_M \inpro{(c(R^{E}_{1/2})+\frac{1}{4} k^{TM})\phi(x),\phi(x)} \d x=0.
\end{equation}
Since by \eqref{cfsph} and \eqref{esttoshow}, for all $x\in M$,
\[
\inpro{(c(R^{E}_{1/2})+\frac{1}{4} k^{TM})\phi(x),\phi(x)}\geqslant 0,
\]
it then follows that
\begin{equation}\label{eqcvtw}
\inpro{c(R^{E}_{1/2})\phi(x),\phi(x)}=-\frac{1}{4} k^{TM}\abs{\phi(x)}^2
=-\frac{1}{4}(2k-1)(2k-2)\abs{\phi(x)}^2.
\end{equation}

On the other hand, given an $x \in M$ and an orthonormal basis $\{\epsilon_i\}_{i=1}^{2k-1}$ of $T_{f(x)}\Sp^{2k-1}$, let $\{\epsilon^i\}_{i=1}^{2k-1}$ be the dual basis of $\{\epsilon_i\}_{i=1}^{2k-1}$. Then, by \eqref{cfsph}, 
\[
c(R^{E}_{1/2})\phi(x)=-\frac{1}{2}\sum_{i<j}c(f^\ast(\epsilon^i \wedge \epsilon^j))\otimes f^\ast[\bar{c}(\epsilon_i)\bar{c}(\epsilon_j)]\phi(x),
\]
where, for each pair $(i,j)$, it follows from Proposition \ref{norm2form} that 
\[
\abs{\inpro{c(f^\ast(\epsilon^i \wedge \epsilon^j))\otimes f^\ast[\bar{c}(\epsilon_i)\bar{c}(\epsilon_j)] \phi(x),\phi(x)}}\leqslant \abs{\phi(x)}^2.
\]
Therefore, \eqref{eqcvtw} requires that, for each pair $(i,j)$,
\begin{equation}\label{equality}
\inpro{c(f^\ast(\epsilon^i \wedge \epsilon^j))\otimes f^\ast[\bar{c}(\epsilon_i)\bar{c}(\epsilon_j)] \phi(x),\phi(x)}=\abs{\phi(x)}^2,
\end{equation}
which implies that $\abs{c(f^\ast(\epsilon^i \wedge \epsilon^j))}=1$. Then, from Proposition \ref{norm2form}, it follows that $\abs{f^\ast(\epsilon^i \wedge \epsilon^j)}=1$ for all $1\leqslant i <j \leqslant 2k-1$. And according to Proposition \ref{normeq}, $f$ is a local isometry. Furthermore, since $\Sp^{2k-1}$ is simply connected, $f$ is automatically an isometry.
\end{proof}

\section{Proof of Theorem \ref{convexsfc}: generalization to convex closed hypersurfaces in $\mathbb{R}^{2k}$}

In this section, we replace the sphere $\Sp^{2k-1}$ with an arbitrary smooth strictly convex closed hypersurface $X\subset \mathbb{R}^{2k}$, and equip $X$ with the induced metric $g_0$. Denote by $k^{TX}$ the associated scalar curvature. Let $f: M \rightarrow X$ be a smooth $(1,\Lambda^2)$-contracting map of nonzero degree, such that for any $x\in M$
\begin{equation}\label{fcontracting}
k^{TX}(f(x)) \leqslant k^{TM}(x).
\end{equation}

Denote by $\mathbf{G}: X \to \Sp^{2k-1}\subset \mathbb{R}^{2k}$ the Gauss map. Once again, as in \cite[pp. 96]{petersen2006riemannian}, for each $x\in X$, we identify $T_x \mathbb{R}^{2k}$ with $\mathbb{R}^{2k}$, so that $\mathbf{G}(x)$ is identified with the outward unit normal vector of $X$ at the point $x$ and $T_x X$ is identified with $\{\mathbf{G}(x)\}^\perp \subset \mathbb{R}^{2k}$.  Since in Section 2.3, $\{\mathbf{G}(x)\}^\perp$ is also identified with $T_{\mathbf{G}(x)}\Sp^{2k-1}$, the shape operator defined by $\mathbf{S}=\d\mathbf{G}: T_x X \to T_{\mathbf{G}(x)} \Sp^{2k-1}$ is now an endomorphism of the space $\{\mathbf{G}(x)\}^\perp \cong T_x X \cong  T_{\mathbf{G}(x)}\Sp^{2k-1}$. 

\subsection{Construction of a vector bundle}
Once again, we consider the vector bundle $E_1=\mathbf{G}^\ast E_0$ with the pull-back metric, and equip it with a family of unitary connections $\{\nabla^{E_1}_s=\mathbf{G}^\ast \nabla_s=\mathbf{G}^\ast(\d+s\cdot\omega): s\in [0,1]\}$, where $\omega$ is defined in \eqref{cntform}. Then the associated curvature is 
\[
R^{E_1}_s=\mathbf{G}^\ast F_s, \qquad \forall s\in [0,1].
\]
At an arbitrary point $x\in X$, let $\{\epsilon_i\}_{i=1}^{2k-1}$ be an orthonormal basis of $T_x X$ and $\{\epsilon^i\}_{i=1}^{2k-1}$ be its dual basis. Since $T_x X$ is identified with $T_{\mathbf{G}(x)} \Sp^{2k-1}$, we also regard $\{\epsilon_i\}_{i=1}^{2k-1}$ and $\{\epsilon^i\}_{i=1}^{2k-1}$ as an orthonormal basis of $T_{\mathbf{G}(x)}\Sp^{2k-1}$ and $T^\ast_{\mathbf{G}(x)} \Sp^{2k-1}$ respectively. And thus, according to \eqref{curvfs}, we write that
\[
F_s(\mathbf{G}(x))=-2s(1-s)\sum_{k<l}\epsilon^k \wedge \epsilon^l \otimes \bar{c}(\epsilon_k)\bar{c}(\epsilon_l).
\]
And then, as in \eqref{rssph}, the curvature $R^{E_1}_s$ is determined by \footnote{
Although it might not be used in the proof, we would like to mention that the Gauss equation actually tells that $g_0(\mathbf{S}(\epsilon_i),\epsilon_k)g_0(\mathbf{S}(\epsilon_j),\epsilon_l)-g_0(\mathbf{S}(\epsilon_i),\epsilon_l)g_0(\mathbf{S}(\epsilon_j),\epsilon_k)=g_0(R^{TX}(\epsilon_i,\epsilon_j)\epsilon_k, \epsilon_l)$.}
\begin{equation*}
\begin{split}
R^{E_1}_s(x)(\epsilon_i,\epsilon_j)=&\mathbf{G}^\ast [F_s(\mathbf{G}(x))](\epsilon_i,\epsilon_j)
=-2s(1-s) \sum_{k< l}\mathbf{G}^\ast(\epsilon^k\wedge \epsilon^l) (\epsilon_i, \epsilon_j) \cdot \mathbf{G}^\ast[\bar{c}(\epsilon_k) \bar{c}(\epsilon_l)]\\
=&-2s(1-s) \sum_{k< l}(\epsilon^k\wedge \epsilon^l) (\mathbf{S}(\epsilon_i), \mathbf{S}(\epsilon_j)) \cdot \mathbf{G}^\ast[\bar{c}(\epsilon_k) \bar{c}(\epsilon_l)]\\
=&-2s(1-s) \sum_{k< l}\left[g_0(\mathbf{S}(\epsilon_i),\epsilon_k)g_0(\mathbf{S}(\epsilon_j),\epsilon_l)-g_0(\mathbf{S}(\epsilon_i),\epsilon_l)g_0(\mathbf{S}(\epsilon_j),\epsilon_k)\right]\cdot \mathbf{G}^\ast[\bar{c}(\epsilon_k) \bar{c}(\epsilon_l)].
\end{split}
\end{equation*}

Furthermore, since $X$ is strictly convex, the shape operator $\mathbf{S}$ is strictly positive, and therefore, the orthonormal basis $\{\epsilon_i\}_{i=1}^{2k-1}$ of $T_x X$ can be chosen so that $\mathbf{S}(\epsilon_i)=\lambda_i \epsilon_i$ for $\lambda_i>0$ ($i=1,2, \dots, 2k-1$). It then follows that
\begin{equation}
R^{E_1}_s(x)(\epsilon_i,\epsilon_j)=-2s(1-s)\lambda_i \lambda_j \mathbf{G}^\ast[\bar{c}(\epsilon_i)\bar{c}(\epsilon_j)].
\end{equation}
Then, as an analogue of \eqref{curvfs} , one has that
\begin{equation}\label{xscurv}
R^{E_1}_s(x)=-2s(1-s)\sum_{i<j}\lambda_i \lambda_j (\epsilon^i \wedge \epsilon^j) \otimes \mathbf{G}^\ast[\bar{c}(\epsilon_i)\bar{c}(\epsilon_j)].
\end{equation}

\subsection{A family of Dirac operators and its spectral flow}
Now on the manifold $M$, we denote by $S$ its spinor bundle $S(TM)$ and equip it with the connection $\nabla^S$ induced by the Levi-Civita connection on $TM$. Let $E=f^\ast E_1$ be the pull-back of the vector bundle $E_1$ equipped with the pull-back metric. Then $\{f^\ast \nabla^{E_1}_s: s\in [0,1]\}$ gives a family of unitary connections on $E$ with associated curvature operator $R^E_s=(f^\ast \nabla^{E_1}_s)^2=f^\ast R^{E_1}_s$.
We consider the twisted bundle $S\otimes E$ over the manifold $M$ and equip it with a family $\{\nabla^{S\otimes E}_s\}_{s\in [0,1]}$ of unitary connections determined by
\begin{equation}
\nabla^{S\otimes E}_s= \nabla^S\otimes 1+1\otimes f^\ast \nabla^{E_1}_s.
\end{equation}
This induces a family of twisted Dirac operators $D^E_s: C^\infty(M,S\otimes E) \rightarrow C^\infty(M,S\otimes E)$, which, in terms of a local orthonormal frame $\{e_i\}_{i=1}^{2k-1}$ of $TM$, is determined by
\begin{equation}
D^E_s=\sum_{i=1}^{2k-1} c(e_i) \nabla^{S\otimes E}_{s,e_i}.
\end{equation}

The computation of the spectral flow $\sfl{D_s^E}$ then follows as a simple generalization of the result in the previous section.
\begin{prop}
The spectral flow $\sfl{D_s^E}$ is given by
\begin{equation}
\sfl{D^E_s}=-\deg(f)\neq 0.
\end{equation}
\end{prop}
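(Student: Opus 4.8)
The plan is to reduce the computation of $\sfl{D^E_s}$ for the Gauss-map-twisted family to exactly the same topological computation carried out in Section 3, using the fact that $E = f^\ast E_1 = f^\ast \mathbf{G}^\ast E_0 = (\mathbf{G}\circ f)^\ast E_0$. The key observation is that the connection $\nabla^{E_1}_s = \mathbf{G}^\ast \nabla_s$ is the pull-back under $\mathbf{G}$ of the connection $\nabla_s = \d + s g^{-1}[\d,g]$ on $E_0$, so $f^\ast \nabla^{E_1}_s = (\mathbf{G}\circ f)^\ast \nabla_s$. Therefore the entire construction of $\{D^E_s : s\in[0,1]\}$ is identical to that in Section 3, but with the map $f: M\to\Sp^{2k-1}$ replaced by the composite $\mathbf{G}\circ f: M\to\Sp^{2k-1}$.

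First I would invoke \cite[Theorem 2.8]{getzler1993odd} verbatim, exactly as in the previous section's Proposition, to get
\[
\sfl{D^E_s} = -\left(\frac{1}{2\pi\sqrt{-1}}\right)^k \int_M \hat{A}(TM,\nabla^{TM})\,(\mathbf{G}\circ f)^\ast \ch(g,\d).
\]
Then, using that $\ch(g,\d)$ is closed and that every closed form of positive degree strictly less than $2k-1$ on $\Sp^{2k-1}$ is exact, only the top-degree term $[\ch(g,\d)]^{\max}$ survives the integration against the (degree-zero part of the) $\hat{A}$-form, yielding
\[
\int_M \hat{A}(TM,\nabla^{TM})\,(\mathbf{G}\circ f)^\ast \ch(g,\d) = \deg(\mathbf{G}\circ f)\int_{\Sp^{2k-1}}\ch(g,\d),
\]
and combining with the normalization $-\left(\frac{1}{2\pi\sqrt{-1}}\right)^k\int_{\Sp^{2k-1}}\ch(g,\d) = -1$ from \cite{getzler1993odd} gives $\sfl{D^E_s} = -\deg(\mathbf{G}\circ f)$.

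Finally I would compute $\deg(\mathbf{G}\circ f) = \deg(\mathbf{G})\cdot\deg(f)$, so it remains to check $\deg(\mathbf{G}) = 1$: since $X$ is a smooth strictly convex closed hypersurface in $\mathbb{R}^{2k}$, its Gauss map $\mathbf{G}: X\to\Sp^{2k-1}$ is a diffeomorphism (the strict convexity makes $\mathbf{S} = \d\mathbf{G}$ positive definite everywhere, hence $\mathbf{G}$ is a local diffeomorphism, and it is a homeomorphism by standard convexity arguments), and it is orientation-preserving with respect to the outward-normal orientations, so $\deg(\mathbf{G}) = 1$. Hence $\sfl{D^E_s} = -\deg(f)$, which is nonzero by hypothesis. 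I do not expect any real obstacle here: the only point requiring a sentence of justification is $\deg(\mathbf{G}) = 1$, and even that is classical; the curvature formula \eqref{xscurv} plays no role in the spectral-flow computation itself (it will be needed only for the subsequent invertibility argument), which is why the proposition is described as a "simple generalization" of the previous section's result.
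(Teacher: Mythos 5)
Your proposal is correct and follows essentially the same route as the paper: identify the family as the Section 3 construction for the composite map $\mathbf{G}\circ f$ (the paper phrases this by writing $D^E_s=D^E_0+s\,g_M^{-1}[D^E_0,g_M]$ with $g_M=(\mathbf{G}\circ f)^\ast g$ before citing Getzler), obtain $\sfl{D^E_s}=-\deg(\mathbf{G}\circ f)=-\deg(\mathbf{G})\deg(f)$, and conclude with $\deg(\mathbf{G})=1$ from strict convexity of $X$.
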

\begin{proof}
For the spectral flow of the family $\{D^E_s: s\in [0,1]\}$ here, we consider the map $\mathbf{G}\circ f: M \rightarrow \Sp^{2k-1}\subset \mathbb{R}^{2k}$, and define $g_M \in C^\infty(M, \End(E))$ by $g_M=(\mathbf{G}\circ f)^\ast g$. Then, for each $s\in[0,1]$,
\[
D_s^E=D_0^E+s g_M^{-1} [D_0^E, g_M].
\]
It then follows from \cite[Theorem 2.8]{getzler1993odd}, in precisely the same way as in Section 3.2, that
\begin{equation}
\sfl{D^E_s}=-\deg(\mathbf{G}\circ f)=-\deg(\mathbf{G})\deg(f).
\end{equation}
On the other hand, since the hypersurface $X$ is strictly convex, the Gauss map $\mathbf{G}$ is a diffeomorphism, and therefore, $\deg(\mathbf{G})=1$. Combining with the assumption that $\deg(f)\neq 0$, it implies that
\begin{equation}
\sfl{D^E_s}=-\deg(f)\neq 0.
\end{equation}
\end{proof}

\subsection{Vanishing of the spectral flow}
Similarly, in this case, we show again that the geometric assumptions imply the invertibility of the operator $D^E_s$ for every $s\in [0,1]$. 
\begin{thm}
Assume that the scalar curvature $k^{TM}\geqslant f^\ast (k^{TX})$ all over the manifold $M$, and $k^{TM}(x)> k^{TX}(f(x))$ at some $x\in M$, then the operator $D^E_s$ is invertible for every $s\in [0,1]$.
\end{thm}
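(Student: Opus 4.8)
The plan is to run the proof of Theorem~\ref{invsph} essentially verbatim; the single new ingredient is the Gauss equation for the hypersurface $X\subset\mathbb{R}^{2k}$. First I would fix $x\in M$ and, using strict convexity, choose the orthonormal basis $\{\epsilon_i\}_{i=1}^{2k-1}$ of $T_{f(x)}X$ diagonalizing the shape operator, $\mathbf{S}(\epsilon_i)=\lambda_i\epsilon_i$ with all $\lambda_i>0$. Applying \eqref{xscurv} at the point $f(x)\in X$ and pulling back by $f$ gives
\[
R^E_s(x)=-2s(1-s)\sum_{i<j}\lambda_i\lambda_j\, f^\ast(\epsilon^i\wedge\epsilon^j)\otimes(\mathbf{G}\circ f)^\ast[\bar{c}(\epsilon_i)\bar{c}(\epsilon_j)],
\]
where each $(\mathbf{G}\circ f)^\ast[\bar{c}(\epsilon_i)\bar{c}(\epsilon_j)]$ is an isometry of $E_x$. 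Since the ambient space $\mathbb{R}^{2k}$ is flat, the Gauss equation identifies the sectional curvature of $X$ along $\epsilon_i\wedge\epsilon_j$ as $\lambda_i\lambda_j$, so that $\sum_{i<j}\lambda_i\lambda_j=\tfrac12 k^{TX}(f(x))$.

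Next I would combine this with \eqref{clif2fm} to write $c(R^E_s)\phi(x)=-2s(1-s)\sum_{i<j}\lambda_i\lambda_j\, c(f^\ast(\epsilon^i\wedge\epsilon^j))\otimes(\mathbf{G}\circ f)^\ast[\bar{c}(\epsilon_i)\bar{c}(\epsilon_j)]\phi(x)$ and apply Proposition~\ref{norm2form} --- which is legitimate since $f^\ast(\epsilon^i\wedge\epsilon^j)$ wedges with itself to zero --- together with the $(1,\Lambda^2)$-contracting hypothesis to obtain $\abs{c(f^\ast(\epsilon^i\wedge\epsilon^j))\otimes(\mathbf{G}\circ f)^\ast[\bar{c}(\epsilon_i)\bar{c}(\epsilon_j)]\phi(x)}\leqslant\abs{\phi(x)}$ for every pair $(i,j)$, exactly as in \eqref{esttoshow}. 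Using $s(1-s)\leqslant\tfrac14$ and the Gauss identity, this yields
\[
\inpro{c(R^E_s)\phi(x),\phi(x)}\geqslant-2s(1-s)\Big(\sum_{i<j}\lambda_i\lambda_j\Big)\abs{\phi(x)}^2=-s(1-s)\,k^{TX}(f(x))\abs{\phi(x)}^2\geqslant-\tfrac14 k^{TX}(f(x))\abs{\phi(x)}^2,
\]
and the comparison $k^{TX}(f(x))\leqslant k^{TM}(x)$ upgrades this to $\inpro{c(R^E_s)\phi(x),\phi(x)}\geqslant-\tfrac14 k^{TM}(x)\abs{\phi(x)}^2$, strict whenever $\phi(x)\neq0$ and $s\neq\tfrac12$ --- strict convexity enters here, since it forces $k^{TX}>0$ on all of $X$. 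Inserting this into the Lichnerowicz formula \eqref{lichnerowicz} exactly as in \eqref{invertbileneq12} shows $\norm{D^E_s\phi}^2>0$ for every nonzero $\phi$ when $s\in[0,1]\setminus\{\tfrac12\}$, so $D^E_s$ is invertible for those $s$.

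It then remains to treat $s=\tfrac12$, which I would do by unique continuation exactly as in the proof of Theorem~\ref{invsph}. If $\phi\in\ker(D^E_{1/2})$, then $\norm{\nabla^{S\otimes E}_{1/2}\phi}^2+\tfrac14\inppro{k^{TM}\phi,\phi}+\inppro{c(R^E_{1/2})\phi,\phi}=0$; since the first term is nonnegative and, by the estimate above, $\tfrac14 k^{TM}(x)\abs{\phi(x)}^2+\inpro{c(R^E_{1/2})\phi(x),\phi(x)}\geqslant0$ pointwise, all three terms vanish, and in particular $\tfrac14 k^{TM}(x)\abs{\phi(x)}^2+\inpro{c(R^E_{1/2})\phi(x),\phi(x)}\equiv0$. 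Choosing $x_0\in M$ with $k^{TM}(x_0)>k^{TX}(f(x_0))$ and then $\alpha>0$, a neighborhood $U\ni x_0$ with $k^{TM}(y)\geqslant k^{TX}(f(y))+4\alpha$ on $U$, the estimate forces $0\geqslant\alpha\abs{\phi(y)}^2$ on $U$, hence $\phi|_U\equiv0$; the unique continuation property of Dirac operators \cite[Chapter 8]{booss2012elliptic} then gives $\phi\equiv0$, so $D^E_{1/2}$ is invertible as well.

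The step I expect to require the most care --- though it is not really an obstacle --- is the curvature estimate: one must carry the principal-curvature weights $\lambda_i\lambda_j$ through Proposition~\ref{norm2form} and recognize, via the flatness of $\mathbb{R}^{2k}$ and the Gauss equation, that $\sum_{i<j}\lambda_i\lambda_j$ equals $\tfrac12 k^{TX}(f(x))$. Once this is available, the scalar-curvature comparison $k^{TX}\circ f\leqslant k^{TM}$ closes the argument precisely as in the spherical case, with strict convexity ($k^{TX}>0$) supplying the strictness for $s\neq\tfrac12$ and unique continuation disposing of $s=\tfrac12$.
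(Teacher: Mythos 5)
Your argument is correct and follows the paper's proof essentially verbatim: the same diagonalization of the shape operator, the same application of Proposition \ref{norm2form} to the pulled-back curvature \eqref{xscurv}, the Gauss-equation identity $2\sum_{i<j}\lambda_i\lambda_j=k^{TX}(f(x))$ (which the paper uses implicitly in \eqref{cvxtwtcurv} and notes in a footnote), the comparison $k^{TX}\circ f\leqslant k^{TM}$ with strict convexity giving strictness for $s\neq\tfrac12$, and unique continuation at $s=\tfrac12$. No gaps.
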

\begin{proof}
The proof is pretty much the same as in the previous section. From the Lichnerowicz formula, it follows that, for any $\phi \in C^\infty (S\otimes E)$,
\begin{equation}
\inppro{\de2 \phi, \phi}=\inppro{\nabla^{S\otimes E,\ast}_s \nabla^{S\otimes E}_s \phi,\phi}+\frac{1}{4}\inppro{k^{TM}\phi,\phi}+\inppro{c(R^{E}_s)\phi,\phi}.
\end{equation}

Given any $x\in M$, we take an orthonormal basis of $\{\epsilon_i\}_{i=1}^{2k-1}$ of $T_{f(x)}X$, such that $\mathbf{S}(\epsilon_i)=\lambda_i \epsilon_i$ ($\lambda_i>0$), and let $\{\epsilon^i\}_{i=1}^{2k-1}$ be its dual basis. It follows from \eqref{xscurv} that
\begin{equation}\label{twcvcnvx}
\begin{split}
c(R^{E}_s)\phi(x)=&c((\mathbf{G}\circ f)^\ast F_s)\phi(x)=c(f^\ast(\mathbf{G}^\ast F_s) )\phi(x)\\
=&-2s(1-s)\sum_{\substack{i<j}}\lambda_i \lambda_j c(f^\ast(\epsilon^i \wedge \epsilon^j))\otimes (\mathbf{G}\circ f)^\ast [\bar{c}(\epsilon_i)\bar{c}(\epsilon_j)]\phi(x).
\end{split}
\end{equation}
Once again, it follows from Proposition \ref{norm2form} that, for each $(i,j)$,
\begin{equation}\label{estshown}
\abs{\inpro{c(f^\ast(\epsilon^i\wedge\epsilon^j))\otimes (\mathbf{G}\circ f)^\ast [\bar{c}(\epsilon_i)\bar{c}(\epsilon_j)]\phi(x),\phi(x)}}\leqslant \abs{\phi(x)}^2.
\end{equation}
So, at the point $x\in M$, one has for all $s\in[0,1]$ that
\begin{equation}\label{cvxtwtcurv}
\begin{split}
\inpro{c( R^{E}_s)\phi(x),\phi(x)}=&-2s(1-s)\sum_{i<j}\lambda_i \lambda_j\inpro{c(f^\ast(\epsilon^i \wedge \epsilon^j))\otimes (\mathbf{G}\circ f)^\ast[\bar{c}(\epsilon_i)\bar{c}(\epsilon_j)]\phi(x), \phi(x)}\\
                                            \geqslant &-2s(1-s)\sum_{i<j}\lambda_i \lambda_j\abs{\phi(x)}^2\\
=&-s(1-s)k^{TX}(f(x))\abs{\phi(x)}^2.
\end{split}
\end{equation}

The rest of this proof is precisely the same as that of Theorem \ref{invsph}. It follows that the operator $D^E_s$ is invertible for all $s\in [0,1]$.
\end{proof}

\subsection{Rigidity}
\begin{prop}
If the scalar curvature $k^{TM}\equiv f^\ast k^{TX}$, then $f$ has to be an isometry. 
\end{prop}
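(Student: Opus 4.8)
The plan is to repeat the argument of Proposition~\ref{rgsphere} almost verbatim, the one new ingredient being that the strict positivity of the principal curvatures $\lambda_i$ of $X$ (equivalently, strict convexity) is what lets us detach each term of the curvature sum. First I would record that under the hypothesis $k^{TM}\equiv f^\ast k^{TX}$ the scalar curvature is pointwise positive: comparing the last two lines of \eqref{cvxtwtcurv} gives $k^{TX}(f(x))=2\sum_{i<j}\lambda_i\lambda_j$, which is $>0$ because each $\lambda_i>0$, hence $k^{TM}>0$ on $M$. Combining the Lichnerowicz formula with the nonnegativity of the connection Laplacian and the pointwise estimate \eqref{cvxtwtcurv} then gives, for $\phi\neq 0$ and $s\in[0,1]\setminus\{\tfrac12\}$,
\[
\norm{D^E_s\phi}^2\;\geqslant\;\Big(\tfrac14-s(1-s)\Big)\inppro{k^{TM}\phi,\phi}\;>\;0,
\]
so $D^E_s$ is invertible away from $s=\tfrac12$; since $\sfl{D^E_s}=-\deg(f)\neq 0$, the operator $D^E_{1/2}$ cannot be invertible, and I fix a nonzero $\phi\in\ker D^E_{1/2}$.

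Next I run the pointwise argument. From $\norm{D^E_{1/2}\phi}^2=0$, the Lichnerowicz formula and the same nonnegativity considerations force
\[
\tfrac14 k^{TM}(x)\abs{\phi(x)}^2+\inpro{c(R^E_{1/2})\phi(x),\phi(x)}=0\qquad\text{for all }x\in M.
\]
Fix $x$ and take the $\mathbf{S}$-eigenbasis $\{\epsilon_i\}$ of $T_{f(x)}X$. Substituting the expansion \eqref{twcvcnvx} and using $\tfrac14 k^{TM}(x)=\tfrac14 k^{TX}(f(x))=\tfrac12\sum_{i<j}\lambda_i\lambda_j$, the identity rearranges to
\[
\sum_{i<j}\lambda_i\lambda_j\left(\abs{\phi(x)}^2-\inpro{c(f^\ast(\epsilon^i\wedge\epsilon^j))\otimes(\mathbf{G}\circ f)^\ast[\bar{c}(\epsilon_i)\bar{c}(\epsilon_j)]\phi(x),\phi(x)}\right)=0.
\]
Each coefficient $\lambda_i\lambda_j$ is strictly positive by convexity and each parenthesis is $\geqslant 0$ by \eqref{estshown}, so every summand vanishes, i.e. $\inpro{c(f^\ast(\epsilon^i\wedge\epsilon^j))\otimes(\mathbf{G}\circ f)^\ast[\bar{c}(\epsilon_i)\bar{c}(\epsilon_j)]\phi(x),\phi(x)}=\abs{\phi(x)}^2$ for all $i<j$. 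Since $(\mathbf{G}\circ f)^\ast[\bar{c}(\epsilon_i)\bar{c}(\epsilon_j)]$ is a fiberwise isometry of $E$ and $f^\ast(\epsilon^i\wedge\epsilon^j)\wedge f^\ast(\epsilon^i\wedge\epsilon^j)=0$, Proposition~\ref{norm2form} identifies the norm of the vector on the left as $\abs{f^\ast(\epsilon^i\wedge\epsilon^j)(x)}\,\abs{\phi(x)}$; Cauchy--Schwarz together with the $(1,\Lambda^2)$-contracting hypothesis then forces $\abs{f^\ast(\epsilon^i\wedge\epsilon^j)(x)}=1$ for all $i<j$, at every $x$ with $\phi(x)\neq 0$.

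Finally I globalize. By the unique continuation property of Dirac operators the zero set of $\phi$ has empty interior, so the equalities $\abs{f^\ast(\epsilon^i\wedge\epsilon^j)(x)}=1$ hold on a dense subset of $M$; Proposition~\ref{normeq} (applicable since $\dim M=\dim X=2k-1\geqslant 3$) then shows $df_x$ is a linear isometry at each such $x$, and since the condition $f^\ast g_0=g^{TM}$ at $x$ is closed, $f$ is a local isometry on all of $M$. A local isometry from the compact, hence complete, manifold $M$ to the connected manifold $X$ is a Riemannian covering map; as $X$ is a closed strictly convex hypersurface in $\mathbb{R}^{2k}$ it is diffeomorphic to $\Sp^{2k-1}$ and, since $2k-1\geqslant 3$, simply connected, so the covering is a diffeomorphism and $f$ is an isometry. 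The only step genuinely beyond transcribing Proposition~\ref{rgsphere} is the use of $\lambda_i\lambda_j>0$ to peel off each term of the curvature sum — this is exactly where strict convexity of $X$ enters — together with the unique-continuation-plus-continuity argument that upgrades the pointwise equalities from $\{\phi\neq 0\}$ to the whole of $M$.
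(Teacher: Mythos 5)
Your proof is correct and follows essentially the same route as the paper's: invertibility of $D^E_s$ for $s\neq\tfrac12$, the nonvanishing spectral flow forcing a nonzero element of $\ker D^E_{1/2}$, the pointwise equality forcing $\abs{f^\ast(\epsilon^i\wedge\epsilon^j)}=1$ via Proposition \ref{norm2form}, and then Proposition \ref{normeq} together with the simple connectedness of $X$ (Gauss map a diffeomorphism onto $\Sp^{2k-1}$). The only differences are that you make explicit some points the paper leaves implicit --- the strict positivity $\lambda_i\lambda_j>0$ used to peel off each term of the curvature sum, the positivity of $k^{TX}$ needed for invertibility away from $s=\tfrac12$, and the unique-continuation/density argument handling points where $\phi$ vanishes --- which are refinements of, not departures from, the paper's argument.
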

\begin{proof}
This proof is, to a large extent, the same as Proposition \ref{rgsphere}. Assuming $k^{TM} \equiv f^\ast k^{TX}$, it then follows again from the above computation that, $D^E_s$ is invertible for all $s\in [0,1]\setminus \{\frac{1}{2}\}$. On the other hand, since $\sfl{D^E_s}\neq 0$, $D^E_{1/2}$ cannot be invertible, i.e. there exists a $\phi\in \ker(D^E_{1/2})$, such that $\norm{\phi}\neq 0$. Then $\phi$ has to satisfy that
\begin{equation}\label{vanish}
\inppro{(c(R^E_{1/2})+\frac{1}{4}k^{TM})\phi,\phi}=\int_M \inpro{(c(R^E_{1/2})+\frac{1}{4} k^{TM})\phi(x),\phi(x)} \d x=0.
\end{equation}

Given any $x\in M$, we take an orthonormal basis $\{\epsilon_i\}_{i=1}^{2k-1}$ of $T_{f(x) }X$ and its dual basis $\{\epsilon^i\}_{i=1}^{2k-1}$ as in the previous section, so that \eqref{twcvcnvx} holds. The formulae \eqref{estshown}, \eqref{cvxtwtcurv}, and \eqref{vanish} then imply that for each pair $(i,j)$,
\begin{equation}\label{equalitycnv}
\inpro{c(f^\ast(\epsilon^i\wedge\epsilon^j))\otimes (\mathbf{G}\circ f)^\ast [\bar{c}(\epsilon_i)\bar{c}(\epsilon_j)]\phi,\phi}(x)= \abs{\phi(x)}^2.
\end{equation}
Thus $\abs{f^\ast(\epsilon^i\wedge\epsilon^j)}=1$ for all $(i,j)$, and it then follows from Proposition \ref{normeq} that $f$ has to be a local isometry. In addition, $X$ is simply connected, because, for the smooth strictly convex closed hypersurface $X\subset \mathbb{R}^{2k}$, the Gauss map $\mathbf{G}$ is a diffeomorphism from $X$ to the unit sphere $\Sp^{2k-1}$. Therefore, $f$ is indeed an isometry.
\end{proof}


\textbf{Acknowledgments.}
The authors would like to thank Professor Weiping Zhang for helpful discussions and thank the anonymous referees for careful reading and suggestions. Y. Li was partially supported by Nankai Zhide Foundation. G. Su was partially supported by NSFC 12271266, NSFC 11931007, Nankai Zhide Foundation, and the Fundamental Research Funds for the Central Universities, No. 100-63233103. X. Wang was partially supported by NSFC Grant 12101361, the Project of Young Scholars of SDU, and the Fundamental Research Funds of Shandong University, Grant No. 2020GN063.





\end{document}